\theoremstyle{plain}
\newtheorem{thm}{\protect\theoremname}
\theoremstyle{remark}
\newtheorem*{rem*}{\protect\remarkname}
\theoremstyle{remark}
\newtheorem{rem}[thm]{\protect\remarkname}
\theoremstyle{plain}
\newtheorem{cor}[thm]{\protect\corollaryname}
\theoremstyle{definition}
\newtheorem{example}[thm]{\protect\examplename}
\DeclareMathAlphabet{\mathcal}{OMS}{cmsy}{m}{n}
  \providecommand{\corollaryname}{Corollary}
  \providecommand{\remarkname}{Remark}
\providecommand{\theoremname}{Theorem}
  \providecommand{\corollaryname}{Corollary}
  \providecommand{\theoremname}{Theorem}
  \providecommand{\examplename}{Example}
  \providecommand{\remarkname}{Remark}
\newcommand{\lt}{\left}
\newcommand{\rt}{\right}
\providecommand{\corollaryname}{Corollary}
\providecommand{\examplename}{Example}
\providecommand{\remarkname}{Remark}
\providecommand{\theoremname}{Theorem}
\providecommand{\corollaryname}{Corollary}
\providecommand{\examplename}{Example}
\providecommand{\remarkname}{Remark}
\providecommand{\theoremname}{Theorem}
\providecommand{\corollaryname}{Corollary}
\providecommand{\examplename}{Example}
\providecommand{\remarkname}{Remark}
\providecommand{\theoremname}{Theorem}
\providecommand{\corollaryname}{Corollary}
\providecommand{\examplename}{Example}
\providecommand{\remarkname}{Remark}
\providecommand{\theoremname}{Theorem}
\begin{document}
\title{Explicit solutions for a class of nonlinear backward stochastic differential
equations and their nodal sets }
\author{Zengjing Chen\thanks{Department of Mathematics, Shandong University, Jinan, China. Email:
zjchen@sdu.edu.cn}\ , \ Shuhui Liu\thanks{Department of Mathematics, Shandong University, Jinan, China. Email:
shuhuiliusdu@gmail.com}\ , \ Zhongmin Qian\thanks{Mathematical Institute, University of Oxford, United Kingdom. Email:
qianz@maths.ox.ac.uk}\ , \ Xingcheng Xu\thanks{School of Mathematical Sciences, Peking University, Beijing, China.
Email: xuxingcheng@pku.edu.cn} }
\maketitle
\begin{abstract}
In this paper, we investigate a class of nonlinear backward stochastic
differential equations (BSDEs) arising from financial economics, and
give specific information about the nodal sets of the related solutions.
As applications, we are able to obtain the explicit solutions to an
interesting class of nonlinear BSDEs including the $k$-ignorance
BSDE arising from the modeling of ambiguity of asset pricing.
\end{abstract}
\vspace{5mm}
 \hspace{11mm}\textbf{Keywords:} Cameron-Martin formula, Feynman-Kac
formula, nodal set, nonlinear BSDE, %parabolic equation

\hspace{5mm}parabolic equation \vspace{5mm}

\hspace{5mm}\textbf{MSC(2010):} 60G05, 60G17, 60H10, 60H30 %Primary: 60H10, 60H30; Secondary: 49J30, 93E20.

\section{Introduction}

In a seminal paper \cite{peng1}, Pardoux and Peng (1990) studied
a non-linear backward stochastic differential equation (BSDE)
\begin{equation}
dY_{t}=-g(t,Y_{t},Z_{t})dt+Z_{t}dB_{t},\quad Y_{T}=\xi,\label{1.1}
\end{equation}
where $B$ is a Brownian motion on a probability space $(\varOmega,\mathcal{F},P)$,
$T>0$, and $\xi$ is measurable with respect to Brownian motion trajectories
up to $T$. These authors proved, under some assumptions on the non-linear
driver $g$ and the terminal value $\xi$, that BSDE (\ref{1.1})
possesses a unique solution, a pair of adapted processes $Y$ and
$Z$ satisfying stochastic integral equation
\[
Y_{t}=\xi+\int_{t}^{T}g(s,Y_{s},Z_{s})ds-\int_{t}^{T}Z_{s}dB_{s}
\]
for $0\leq t\leq T$. In past two decades, many researchers have worked
on the theory of BSDEs and have obtained many excellent results about
the solution pair $(Y_{t},Z_{t})$. Since the publication of \cite{peng1},
the theory of BSDEs has been applied to mathematical finance, stochastic
control, partial differential equations, stochastic game and so on,
see for example \cite{chen2,r,MPY,par,peng2,peng0,peng4} and the
literature therein. Explicit solutions to (\ref{1.1}) are known only
in few cases, mainly for the case where $g(t,y,z)$ is linear in $y$
and $z$. It is easy to see that the solution to a linear BSDE is
given by Feynman-Kac's formula (see for example Peng \cite{peng1}).
For a non-linear driver $g(t,y,z)$, little is known about $(Y_{t},Z_{t})$
due to lack of an explicit formula, but see \cite{chen1}, in which
Chen et al. have obtained an interesting co-monotonic theorem of $(Z_{t})$
for a non-linear but special driver $g(t,y,z)$. It remains a challenging
problem in general to derive useful information about solutions of
BSDEs.

In applications to some problems in financial economics, it is useful
to have an explicit expression for the solution of BSDE (\ref{1.1}).
For models appearing in mathematical finance, one needs to determine
the signs of solutions $(Z_{t})$, which allow to identify the monotone
ranges of active hedging. Therefore researchers are very interested
in determining the zeros of $(Z_{t})$ for such BSDE models, i.e.
the nodal set of the process $(Z_{t})$.

The goal of this paper is to identify the nodal sets of solutions
$(Z_{t})$ to a class of non-linear BSDEs which arise from financial
economics, and to identify the monotone ranges of $(Z_{t})$ accordingly.
Our results will cover the so-called $k$-ignorance model in continuous
recursive utilities, studied by Chen and Epstein \cite{chen2}. The
model is a simple BSDE:
\begin{equation}
Y_{t}=\xi+\int_{t}^{T}k|Z_{s}|ds-\int_{t}^{T}Z_{s}dB_{s}\label{1.2}
\end{equation}
for $0\leq t\leq T$, where $k>0$ is a model parameter. (\ref{1.2})
is perhaps the simplest non-linear BSDE. It has significant applications
in discussing non-linear risk measures. Chen et al. \cite{chen3,chen1}
have shown that if $\xi=\varphi(B_{T})$ and $\varphi$ is monotonic,
then the solution $(Y,Z)$ of (\ref{1.2}) can be computed explicitly.
In this case, Chen et al. \cite{chen3,chen1} observed that (\ref{1.2})
can be reduced to an equivalent linear BSDE, so that an explicit formula
may be obtained accordingly. If $\varphi$ is not monotonic, it remains
open to solve BSDE (\ref{1.2}) explicitly. By exploring the information
on the nodal set of $(Z_{t})$, we are able to work out explicit solutions
for a class of non-linear BSDEs, including the $k$-ignorance models,
where $\varphi$ is not necessary monotone. As an application, we
therefore are able to give an explicit representation of the solution
$(Y_{t},Z_{t})$ for the $k$-ignorance model (\ref{1.2}), where
the terminal value is Markovian and $\varphi(x)=x^{2}$ or $\varphi(x)=I_{[a,b)}(x)$.
We should point out that the $k$-ignorance model (\ref{1.2}) with
these terminal values plays an important role in modeling ambiguity
of asset pricing, and we will discuss this point in the last part
of the article. For this aspect, the reader should also refer to Chen
and Epstein \cite{chen2} and the literature therein too.

The paper proceeds as follows. In Section \ref{section-2} we first
introduce some notions, notations and a few basic facts about BSDEs,
which will be used through the paper. We then prove the main results
of the paper, i.e. identifying the nodal set of the solution $(Z_{t})$
to (\ref{1.1}) under some assumptions on its driver $g$ and its
terminal $\xi$. In section \ref{section 3} we give an explicit formula
for the $k$-ignorance model with a suitable terminal value. In section
\ref{section-4}, by applying our general result about the sign of
$(Z_{t})$, we work out the explicit solutions to several examples
where $\xi=I_{[a,b)}(B_{T})$ or $\xi=B_{T}^{2}$, and the driver
$g(z)=k|z|$. We conclude the paper in section \ref{section 5} by
discussing an application of our results in robust pricing in an incomplete
market.

\section{The main results}

\label{section-2}

Let us begin with the notion of backward stochastic differential equations,
recall the basic result on BSDEs and establish notations we will use
in what follows. Let $(B_{t})_{t\geq0}$ be a standard one dimensional
Brownian motion on a probability space $(\Omega,\mathcal{F},P)$.
Let $(\mathcal{F}_{t})$ be the $\sigma$-filtration generated by
the Brownian motion, that is, $\mathcal{F}_{t}=\sigma\{B_{s};0\leq s\leq t\}$
for $t\geq0$.

The driver in formulating the BSDE to be studied in this paper is
a deterministic real function $g(t,y,z)$ for $(t,y,z)\in[0,T]\times\mathbb{R}\times\mathbb{R}$,
which satisfies the following conditions:

\bigskip{}

\emph{(A.1) Lipschitz condition}. There exists a constant $k\geq0$,
such that
\begin{equation}
|g(t,y_{1},z_{1})-g(t,y_{2},z_{2})|\leq k(|y_{1}-y_{2}|+|z_{1}-z_{2}|)\label{eq:A.1}
\end{equation}
for all $t\geq0$, $y_{1},y_{2}\in\mathbb{R}$ and $z_{1},z_{2}\in\mathbb{R}$;
and

\emph{(A.2) Normalization condition}. $g(t,y,0)=0$ for any $(t,y)\in[0,T]\times\mathbb{R}$.

\bigskip{}

We will use the standard notation that $L^{2}(\Omega,\mathcal{F}_{t},P)$
denote the space of $\mathcal{F}_{t}$-measurable and square integrable
random variables on $(\Omega,\mathcal{F},P)$ for each $t\geq0$.
Let
\[
\mathcal{M}(0,T,\mathbb{R})=\left\{ (v_{t})_{t\in[0,T]}:\textrm{ real valued}(\mathcal{F}_{t})\text{-adapted process with }E\left[\int_{0}^{T}|v_{t}|^{2}dt\right]<\infty\right\} .
\]

The fundamental result obtained in Pardoux-Peng \cite{peng1} is the
following. If $g$ satisfies \textit{(A.1), (A.2)}, and $\xi\in L^{2}(\Omega,\mathcal{F}_{T},P)$,
BSDE (\ref{1.1}) has a unique solution, i.e., there is a pair of
adapted processes $Y,Z\in\mathcal{M}(0,T,\mathbb{R})$, which solve
(\ref{1.1}) in the sense that
\begin{equation}
Y_{t}=\xi+\int_{t}^{T}g(s,Y_{s},Z_{s})ds-\int_{t}^{T}Z_{s}dB_{s}\label{eq:bsde-q1}
\end{equation}
for all $t\in[0,T]$.

We are interested in Markovian case, that is, the terminal value $\xi$
in (\ref{eq:bsde-q1}) depends only on $B_{T}$, that is, $\xi=\varphi(B_{T})$,
so that
\begin{equation}
Y_{t}=\varphi(B_{T})+\int_{t}^{T}g(s,Y_{s},Z_{s})ds-\int_{t}^{T}Z_{s}dB_{s}.\label{3.1}
\end{equation}

Let us isolate the following assumptions on $\varphi$, which will
be used in our main results.

\bigskip{}

(\emph{H.1}) There is $c\in\mathbb{R}$, $\varphi$ is symmetric about
$c$, that is , $\varphi(c-x)=\varphi(c+x)$ for all $x\in\mathbb{R}$.

(\emph{H.2}) $\varphi$ is monotone on $[c,\infty)$.

\bigskip{}

We are now in a position to state our first result of the paper.
\begin{thm}
\label{lemma1} Let $g\in C_{b}^{1,3}(\mathbb{R}_{+}\times\mathbb{R}\times\mathbb{R})$
satisfying (A.1) and (A.2), and $\varphi\in C^{3}(\mathbb{R})$. Assume
that the derivatives $\varphi^{(i)}$ (where $i=0,1,2,3$) have at
most polynomial growth.

(1) Let $u(t,x)$ be the unique solution of Cauchy's initial problem
of the parabolic equation
\begin{equation}
\begin{cases}
 & \partial_{t}u=\frac{1}{2}\partial_{xx}^{2}u+g(t,u,\partial_{x}u),\textrm{ in }(0,\infty)\times\mathbb{R},\\
 & u(0,x)=\varphi(x).
\end{cases}\label{pde1}
\end{equation}
Then $Y_{t}=u(T-t,B_{t})$ and $Z_{t}=\partial_{x}u(T-t,B_{t})$ are
the unique solution pair of BSDE (\ref{3.1}).

(2) If in addition $\varphi$ satisfies (H.1) and (H.2), and $g(t,y,z)=g(t,y,-z)$
for any $t\in[0,T]$ and $y,z\in\mathbb{R}$, then

(i) $\partial_{x}u(t,c)=0$ for every $t>0$.

(ii) $w(t,x)=\partial_{x}u(t,x)$ is the unique solution to the initial
value problem of the parabolic equation
\begin{equation}
\partial_{t}w=\frac{1}{2}\partial_{xx}^{2}w+\partial_{z}g(t,u,w)\cdot\partial_{x}w+\partial_{y}g(t,u,w)\cdot w\label{pde3}
\end{equation}
and
\begin{equation}
w(0,x)=\varphi'(x),\quad\textrm{ for }x\in\mathbb{R}.\label{eq:w-int-a1}
\end{equation}
Moreover $w(t,c)=0$ for $t\geq0$.

(iii) Let $x\in\mathbb{R}$ and $0\leq t\leq T$. Let
\[
a_{s,t}=\partial_{y}g\left(t-s,u(t-s,X_{s}^{x}),w(t-s,X_{s}^{x})\right)\quad\textrm{, }b_{s,t}=\partial_{z}g\left(t-s,u(t-s,X_{s}^{x}),w(t-s,X_{s}^{x})\right)
\]
for $0\leq s\leq t$, where $X_{s}^{x}=x+B_{s}$. Define the stochastic
exponential martingale
\begin{equation}
N_{s}=\exp\left\{ \int_{0}^{s}b_{r,t}dB_{r}-\frac{1}{2}\int_{0}^{s}b_{r,t}^{2}dr\right\} \label{eq:c-m-q1}
\end{equation}
for $0\leq s\leq t$. Then
\begin{equation}
w(t,x)=E\left[N_{t}\varphi'(X_{t}^{x})\cdot e^{\int_{0}^{t}a_{s,t}ds}1_{\left\{ t<\tau\right\} }\right]\label{key-req1}
\end{equation}
for every $t\geq0$, where $\tau=\inf\left\{ s\geq0,\ X_{s}^{x}=c\right\} $.
\end{thm}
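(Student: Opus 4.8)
The plan is to prove the four assertions in sequence, each feeding into the next. For part~(2)(i), the idea is to exploit the symmetry of $\varphi$ about $c$ together with the symmetry $g(t,y,z)=g(t,y,-z)$. Consider the function $\tilde u(t,x):=u(t,2c-x)$. A direct substitution into the PDE~\eqref{pde1} shows $\partial_t\tilde u=\tfrac12\partial_{xx}^2\tilde u+g(t,\tilde u,-\partial_x\tilde u)=\tfrac12\partial_{xx}^2\tilde u+g(t,\tilde u,\partial_x\tilde u)$, using the evenness of $g$ in $z$; and the initial datum is $\tilde u(0,x)=\varphi(2c-x)=\varphi(x)$ by~(H.1). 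Hence $\tilde u$ solves the same Cauchy problem as $u$, and by the uniqueness asserted in part~(1) we get $u(t,x)=u(t,2c-x)$ for all $t\ge0$. Differentiating in $x$ and setting $x=c$ yields $\partial_x u(t,c)=-\partial_x u(t,c)$, so $\partial_x u(t,c)=0$ for $t>0$ (and for $t=0$, $w(0,c)=\varphi'(c)=0$ by the symmetry of $\varphi$, provided $\varphi$ is differentiable there, which it is under the $C^3$ hypothesis).

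For part~(2)(ii), the function $w=\partial_x u$ is obtained by differentiating~\eqref{pde1} in $x$; the regularity hypotheses $g\in C^{1,3}_b$ and $\varphi\in C^3$ with polynomial-growth derivatives are exactly what licenses this differentiation and guarantee that $w$ is itself a classical solution of a linear (in $w$, with coefficients depending on the already-known $u$) parabolic equation, namely~\eqref{pde3} with initial value $\varphi'$. Uniqueness in the appropriate growth class is standard parabolic theory. The claim $w(t,c)=0$ is just a restatement of~(2)(i) together with $w(0,c)=\varphi'(c)=0$.

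For part~(2)(iii), the strategy is a Feynman--Kac representation for the linear parabolic equation~\eqref{pde3} combined with a killing argument at the nodal point $c$. Fix $x$ and $t$, and consider $v(s):=w(t-s,X_s^x)$ for $0\le s\le t$, where $X_s^x=x+B_s$. By It\^o's formula and~\eqref{pde3}, $dv(s)=\partial_x w(t-s,X_s^x)\,dB_s+\big(a_{s,t}v(s)+b_{s,t}\partial_x w(t-s,X_s^x)\big)\,ds$. Introducing the integrating factor $e^{\int_0^s a_{r,t}dr}$ to absorb the zeroth-order term and then changing measure via the Girsanov/Cameron--Martin density $N_s$ from~\eqref{eq:c-m-q1} to absorb the $b_{s,t}\partial_x w\,ds$ drift, one finds that $M_s:=N_s\,e^{\int_0^s a_{r,t}dr}\,w(t-s,X_s^x)$ is a local martingale; the growth hypotheses make it a genuine (uniformly integrable on $[0,t]$) martingale. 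Now stop at $\tau=\inf\{s\ge0:X_s^x=c\}$: on $\{\tau\le t\}$ we have $w(t-\tau,X_\tau^x)=w(t-\tau,c)=0$ by~(2)(i), so $M_{t\wedge\tau}=M_t\,1_{\{t<\tau\}}$. Taking expectations, $w(t,x)=M_0=E[M_{t\wedge\tau}]=E\big[N_t\,e^{\int_0^t a_{s,t}ds}\,\varphi'(X_t^x)\,1_{\{t<\tau\}}\big]$, which is~\eqref{key-req1}.

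The main obstacle I anticipate is purely technical rather than conceptual: justifying that $M_s$ is a true martingale (not merely local) on $[0,t]$, which requires controlling $N_s$, the exponential factor $e^{\int_0^s a_{r,t}ds}$ (bounded since $g\in C^1_b$ implies $\partial_y g$ is bounded), and the polynomial growth of $w$ and $\partial_x w$ against the Gaussian tails of $X_s^x$ — together with a Novikov-type condition for $N$, which holds because $b_{r,t}=\partial_z g$ is bounded by the Lipschitz constant $k$. A secondary point needing care is the optional-stopping step near $\tau$: one should first stop at $\tau\wedge(t-\varepsilon)$ or localize, then let $\varepsilon\downarrow0$, using continuity of $w$ up to the boundary $\{x=c\}$ and dominated convergence. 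Everything else is routine It\^o calculus and parabolic uniqueness.
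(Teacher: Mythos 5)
Your proposal follows essentially the same route as the paper: the identical symmetry-plus-uniqueness argument for $\partial_{x}u(t,c)=0$, differentiation of the PDE for (ii), and the same stopped martingale $M_{s}=N_{s}\,e^{\int_{0}^{s}a_{r,t}dr}\,w(t-s,X_{s}^{x})$ (the paper's $q(s)N_{s}w$) combined with optional stopping at $\tau$ and the vanishing of $w$ on the nodal line, including the same technical points (boundedness of $\partial_{y}g,\partial_{z}g$, true-martingale property of $M$, and an approximation to pass from bounded to polynomially growing $\varphi^{(i)}$). Two minor slips only: you never actually address part (1), which is a routine It\^{o} verification plus uniqueness for the BSDE, and the drift in your It\^{o} expansion of $w(t-s,X_{s}^{x})$ should be $-\left(a_{s,t}v+b_{s,t}\partial_{x}w\right)ds$ rather than its negative --- this corrected sign is precisely what makes the integrating factor $e^{+\int_{0}^{s}a_{r,t}dr}$ (and hence your final formula, which is the right one) work.
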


\begin{proof}
Since $\varphi$ is a $C^{3}$-function with polynomial growth and
$g\in C^{1,3}$, so by the theory of parabolic equations of second
order, (\ref{pde1}) possesses a unique solution $u(t,x)$ which belongs
to $C^{1,3}([0,T]\times\mathbb{R})$, see for example \cite{Friedman 1964}.

By applying It?s formula, $Y_{t}=u(T-t,B_{t})$, $Z_{t}=\partial_{x}u(T-t,B_{t})$
solve BSDE (\ref{3.1}), and the conclusion follows from the uniqueness
of the solution to BSDE (\ref{3.1}), which proves the first claim.

%We refer to Proposition 3.1 in  \cite{bri} for more details

Now we prove (2). Since $g(t,y,\cdot)$ is symmetric about $0$, one
can verify that $u(t,c-x)$ and $u(t,x+c)$ are solutions to the parabolic
equation
\begin{equation}
\partial_{t}v=\frac{1}{2}\partial_{xx}^{2}v+g(t,v,\partial_{x}v),\textrm{ in }(0,\infty)\times\mathbb{R}\label{eq:par-qq1}
\end{equation}
and $v(0,x)$ coincides with $\varphi(c-x)$ and $\varphi(c+x)$ respectively.
Since $\varphi$ is symmetric about $c$, that is $\varphi(c-x)=\varphi(c+x)$,
by the uniqueness of the initial value problem for the parabolic equation
(\ref{eq:par-qq1}) we may conclude that $u(t,c+x)=u(t,c-x)$. It
in turn yields that $\partial_{x}u(t,c+x)=-\partial_{x}u(t,c-x)$
for $(t,x)\in\mathbb{R}_{+}\times\mathbb{R}$. In particular $\partial_{x}u(t,c)=0$
for every $t\geq0$. We have thus proven (i).

(ii) follows immediately by differentiating (\ref{pde1}) in $x$.

(iii) Under assumptions on $g(t,y,z)$, $a_{s,t}$ and $b_{s,t}$
are bounded processes where $0\leq s\leq t\leq T$. Since $\varphi\in C^{3}$
and $\varphi^{(i)}$ (where $i=0,1,2,3)$ possess at most polynomial
growth, the unique strong solution $u(t,x)$ to the problem (\ref{pde1})
belongs to $C^{1,3}([0,T]\times\mathbb{R})$. In particular we have
$w\in C^{1,2}([0,T]\times\mathbb{R})$.

Let us first consider the case where $\varphi^{(i)}$ (where $i=0,1,2,3$)
are bounded. For this case, the second order derivative of $u(t,x)$,
that is, $\partial_{x}w(t,x)$ are bounded in $[0,T]\times\mathbb{R}$.
Let $0\leq t\leq T$ be any but fixed.

Define $q(s)$ by solving the ordinary differential equation: $dq(s)=a_{s,t}q(s)ds$
with $q(0)=1$. Then $q(s)$ has finite variations, and is a bounded
process. $N_{s}$ is the solution to the exponential martingale equation:
$dN_{s}=N_{s}b_{s,t}dB_{s}$ and $N_{0}=1$. Of course $N$ is just
the stochastic exponential of $\int_{0}^{\cdot}b_{r,t}dB_{r}$, where
$(b_{s,t})_{s\leq t}$ is a bounded process (while its bound may depend
on $t$). Let $M_{s}=q(s)N_{s}w(t-s,X_{s}^{x})$, where $0\leq s\leq t$.

By It?s formula we have
\begin{align*}
dM_{s} & =q(s)d\left[N_{s}w(t-s,X_{s}^{x})\right]+N_{s}w(t-s,X_{s}^{x})a_{s,t}q(s)ds\\
 & =q(s)N_{s}b_{s,t}w(t-s,X_{s}^{x})dB_{s}+q(s)N_{s}dw(t-s,X_{s}^{x})\\
 & +q(s)N_{s}b_{s,t}\partial_{x}w(t-s,X_{s}^{x})ds+q(s)N_{s}a_{s,t}w(t-s,X_{s}^{x})ds.
\end{align*}
Since $w$ solves (\ref{pde3}), so that
\begin{align*}
dw(t-s,X_{s}^{x}) & =\left(-\partial_{s}w(t-s,X_{s}^{x})+\frac{1}{2}\partial_{xx}^{2}w(t-s,X_{s}^{x})\right)ds+\partial_{x}w(t-s,X_{s}^{x})dB_{s}\\
 & =\left(-a_{s,t}w(t-s,X_{s}^{x})-b_{s,t}\partial_{x}w(t-s,X_{s}^{x})\right)ds+\partial_{x}w(t-s,X_{s}^{x})dB_{s}.
\end{align*}
Substituting this into the previous equality for $M$, we obtain that
\[
dM_{s}=q(s)N_{s}\left[\partial_{x}w(t-s,X_{s}^{x})+b_{s,t}w(t-s,X_{s}^{x})\right]dB_{s}.
\]
We claim that $M$ is a square integrable martingale. In fact, since
\[
\left|q(s)N_{s}\left[\partial_{x}w(t-s,X_{s}^{x})+b_{s,t}w(t-s,X_{s}^{x})\right]\right|\leq C_{1}N_{s}
\]
for some positive constant $C_{1}$ depending on $t$, but not on
$s\leq t.$

But
\begin{align*}
E\left[|N_{s}|^{2}\right] & =E\left[\exp\left(2\int_{0}^{s}b_{r,t}dB_{r}-\int_{0}^{s}|b_{r,t}|^{2}dr\right)\right]\\
 & \leq C_{2}E\left[\exp\left(2\int_{0}^{s}b_{r,t}dB_{r}-2\int_{0}^{t}|b_{r,t}|^{2}dr\right)\right]\\
 & =C_{2}<\infty,
\end{align*}
where $C_{2}$ is a positive constant. Therefore,
\begin{align*}
E\left[|M_{t}|^{2}\right] & =E\left(M_{0}+\int_{0}^{t}\left[q(s)N_{s}\partial_{x}w(t-s,X_{s}^{x})+b_{s,t}w(t-s,X_{s}^{x})\right]dB_{s}\right)^{2}\\
 & \leq2E(M_{0}^{2})+2E\left(\int_{0}^{t}\left[q(s)N_{s}\partial_{x}w(t-s,X_{s}^{x})+b_{s,t}w(t-s,X_{s}^{x})\right]dB_{s}\right)^{2}\\
 & =C_{0}+2E\left(\int_{0}^{t}\left[q(s)N_{s}\partial_{x}w(t-s,X_{s}^{x})+b_{s,t}w(t-s,X_{s}^{x})\right]^{2}ds\right)\\
 & \leq C_{0}+2C_{1}^{2}E\left(\int_{0}^{t}N_{s}^{2}ds\right)\\
 & =C_{0}+2C_{1}^{2}\int_{0}^{t}E(N_{s}^{2})ds\\
 & \leq C_{0}+2C_{1}^{2}\cdot C_{2}t\\
 & <\infty
\end{align*}
which implies that $(M_{s})$ is a square integrable martingale up
to time $t$.

Since
\[
\tau=\inf\left\{ s\geq0,\ X_{s}^{x}=c\right\} =\inf\left\{ s\geq0,\ B_{s}=c-x\right\}
\]
is a stopping time, finite almost surely, see (2.6) in \cite{shreve},
by stopping theorem for martingales, we have $E\left(M_{0}\right)=E\left(M_{t\wedge\tau}\right)$
%(for all $s\leq t$)
, which implies that, since $w(s,c)=0$ for all $s\geq0$,
\begin{align*}
w(t,x) & =E\left(q(t\wedge\tau)N_{t\wedge\tau}w(t-t\wedge\tau,X_{t\wedge\tau}^{x})\right)\\
 & =E\left[N_{t}\varphi'(X_{t}^{x})e^{\int_{0}^{t}a_{r,t}dr}1_{\left\{ t<\tau\right\} }\right]+E\left[q(\tau)N_{\tau}w(t-\tau,X_{\tau}^{x})1_{\left\{ \tau\leq t\right\} }\right]\\
 & =E\left[N_{t}\varphi'(X_{t}^{x})e^{\int_{0}^{t}a_{r,t}dr}1_{\{t<\tau\}}\right].
\end{align*}
A simple approximation procedure allow us to validate the representation
for the case where $\varphi^{(i)}$ (where $i=0,1,2,3)$ possess polynomial
growth. The proof is complete.
\end{proof}
\begin{rem*}
The representation (\ref{key-req1}) is basically Feynman-Kac's formula
for the stopped Brownian motion (killed at hitting the level $c$),
together with the Cameron-Martin formula, see for more information
Pinsky \cite{Pinsky 1995}.

\bigskip{}
 As a corollary, we are now in a position to prove our second main
result.
\end{rem*}
\begin{thm}
\label{1} Suppose that $\varphi\in C^{3}(\mathbb{R})$ satisfies
(H.1) and (H.2) for some $c\in\mathbb{R}$, and $\varphi^{(i)}$ (where
$i=0,1,2,3$) have at most polynomial growth. Suppose that $g$ satisfies
(A.1) and (A.2), $g\in C_{b}^{1,3}(\mathbb{R}_{+}\times\mathbb{R}\times\mathbb{R})$,
and $g(t,y,\cdot)$ is symmetric about $0$, that is, $g(t,y,z)=g(t,y,-z)$
for all $t\geq0,y,z\in\mathbb{R}$. Let $(Y_{t},Z_{t})$ be the solution
pair of BSDE (\ref{3.1}). Then the following conclusions hold.

(1) If $\varphi'(x)\ge0$ and and $\varphi'(x)\not\equiv0$ for all
$x>c$, then $sgn(Z_{t})=sgn(B_{t}-c)$ for all $t\geq0$ almost surely.

(2) Similarly, if $\varphi'(x)\ge0$ and $\varphi'(x)\not\equiv0$
for all $x>c$, then $sgn(-Z_{t})=sgn(B_{t}-c)$ for all $t\geq0$
almost surely.
\end{thm}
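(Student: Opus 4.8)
The plan is to deduce the sign of $Z_{t}=w(T-t,B_{t})$ directly from the representation formula \eqref{key-req1} established in Theorem~\ref{lemma1}. Fix $t>0$ and $x\in\mathbb{R}$; I will analyze the three factors appearing inside the expectation in \eqref{key-req1}. First, $N_{t}$ is a strictly positive random variable, being the value of a stochastic exponential. Second, $e^{\int_{0}^{t}a_{s,t}\,ds}$ is strictly positive, since $a_{s,t}$ is a bounded process, hence the integral is finite. Third, on the event $\{t<\tau\}=\{B_{s}\neq c-x \text{ for all } s\le t\}$, the Brownian path started at $x$ never reaches $c$, so by continuity $X_{s}^{x}-c$ keeps a constant sign on $[0,t]$; in particular $\operatorname{sgn}(X_{t}^{x}-c)=\operatorname{sgn}(x-c)$ on this event. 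Under (H.2), $\varphi$ is monotone on $[c,\infty)$, and by symmetry (H.1) it is monotone in the opposite direction on $(-\infty,c]$; together with $\varphi'\ge0$ on $(c,\infty)$ this gives $\varphi'(y)\ge0$ for $y>c$ and $\varphi'(y)\le0$ for $y<c$, i.e.\ $\varphi'(y)$ and $y-c$ have the same sign for $y\neq c$. Hence on $\{t<\tau\}$ we have $\varphi'(X_{t}^{x})\cdot\operatorname{sgn}(x-c)=|\varphi'(X_{t}^{x})|\ge0$, so the integrand in \eqref{key-req1} has the sign of $x-c$ (or is zero) pointwise on this event.

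Therefore $\operatorname{sgn}(x-c)\cdot w(t,x)=E\big[N_{t}|\varphi'(X_{t}^{x})|e^{\int_{0}^{t}a_{s,t}ds}1_{\{t<\tau\}}\big]\ge0$, which already shows $\operatorname{sgn}(w(t,x))=\operatorname{sgn}(x-c)$ or $w(t,x)=0$. To upgrade this to a strict statement, I must show the expectation is strictly positive whenever $x\neq c$. Since $\varphi'\not\equiv0$ on $(c,\infty)$ and $\varphi'$ is continuous, there is a nonempty open interval $I\subset(c,\infty)$ on which $|\varphi'|>0$; by symmetry the reflected interval works on the other side. For $x>c$, the event that $X_{s}^{x}$ stays strictly above $c$ on $[0,t]$ and lands in $I$ at time $t$ has strictly positive probability (a standard small-ball/support estimate for Brownian motion, or the strict positivity of the killed heat kernel away from the boundary), and on that event all three factors are strictly positive; hence $w(t,x)>0$. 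The case $x<c$ is symmetric, giving $w(t,x)<0$. Thus for each $t>0$, $\operatorname{sgn}(w(t,x))=\operatorname{sgn}(x-c)$ for all $x\in\mathbb{R}$, with the convention that both sides are $0$ when $x=c$.

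Finally I translate this pointwise statement about $w$ into the almost-sure statement about $Z_{t}$. Recall from Theorem~\ref{lemma1}(1) that $Z_{t}=\partial_{x}u(T-t,B_{t})=w(T-t,B_{t})$ for $0\le t<T$. For such $t$ we have $T-t>0$, so applying the result of the previous paragraph with the (deterministic) time $T-t$ and the random point $B_{t}$ yields $\operatorname{sgn}(Z_{t})=\operatorname{sgn}(w(T-t,B_{t}))=\operatorname{sgn}(B_{t}-c)$ almost surely; note $\{B_{t}=c\}$ is a $P$-null event for $t>0$, so the identity holds on the complement, and at $t=0$ it holds trivially or is read off from continuity since $Z$ has a continuous version. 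Assembling over all $t\in[0,T]$ gives the claim of part~(1), and part~(2) follows by replacing $\varphi$ with $-\varphi$, or equivalently $(Y,Z)$ with $(-Y,-Z)$, which solves the BSDE with the same (even, hence unchanged) driver because $g(t,-y,-z)$ equals $g(t,y,z)$ is \emph{not} assumed—so more carefully, one checks that $-\varphi$ still satisfies (H.1), (H.2) and that $\tilde g(t,y,z):=-g(t,-y,z)$ is the relevant driver and is still even in $z$ and still satisfies (A.1), (A.2), reducing (2) to (1).

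The main obstacle is the strict positivity in the second paragraph: the weak inequality comes for free from \eqref{key-req1}, but ruling out $w(t,x)=0$ for $x\ne c$ requires a genuine (if standard) positivity argument for the Brownian motion killed on exiting $(c,\infty)$, together with care that the bounded factor $e^{\int_0^t a_{s,t}ds}$ and the martingale factor $N_t$ are indeed bounded below by a positive constant on the relevant event (which they are, since $a$ and $b$ are bounded and the event lives on a compact time interval). A secondary subtlety is the reduction in part~(2): one should spell out exactly which transformation of the data $(\varphi,g)$ is being used, and verify that all hypotheses (H.1), (H.2), (A.1), (A.2), the $C^{1,3}_b$ regularity, and evenness in $z$ are preserved.
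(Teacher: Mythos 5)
Your proposal is correct and follows essentially the same route as the paper: both read off the sign of $w(t,x)=\partial_x u(t,x)$ from the Feynman--Kac/Cameron--Martin representation (\ref{key-req1}) of Theorem \ref{lemma1}, using that $N_t>0$, that $X^x$ cannot cross $c$ before $\tau$, and that $\varphi'$ has a definite sign on each side of $c$ by (H.1)--(H.2). You merely supply details the paper leaves implicit --- the support argument giving strict positivity of the killed expectation when $\varphi'\not\equiv 0$ on $(c,\infty)$, and the explicit data transformation reducing part (2) to part (1), which the paper dispatches with ``similarly.''
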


\begin{proof}
By Theorem \ref{lemma1}, $Z_{t}=w(T-t,B_{t})$, and
\[
w(t,x)=E\left[N_{t}\varphi'(X_{t}^{x})e^{\int_{0}^{t}a_{s,t}ds}I_{\left\{ t<\tau\right\} }\right]
\]
which allows to determine the sign of $w(t,x)$ accordingly.

Note that if $x>c$, then $X_{t}^{x}>c$ on $t<\tau$, \ so, unless
$\varphi'(x)$ equals zero identically for $x>c$, we must have $P\left(N_{t}\varphi'(X_{t}^{x})1_{\left\{ t<\tau\right\} }>0\right)>0$.
Since $N_{t}>0$, thus if $\varphi'\geq0$ and $\varphi'$ does not
vanish identically on $(c,\infty)$, we have $w(t,x)>0$ for $x>c$
and $w(t,x)<0$ for $x<c$, which implies $\textrm{sgn}(Z_{t})=\textrm{sgn}(B_{t}-c)$.

Similarly, if $\varphi'(x)\leq0$ and $\varphi'(x)\not\equiv0$ for
all $x>c$, we have $\textrm{sgn}(Z_{t})=-\textrm{sgn}(B_{t}-c)$.

The proof of Theorem \ref{1} is completed.
\end{proof}
Theorem \ref{1} may be stated as the following ``non-vanishing theorem'',
which is the most useful form in our discussions below.
\begin{thm}
Under the same assumptions on $g$ and $\varphi$ in Theorem \ref{1},
and suppose $(Y_{t},Z_{t})$ is the unique solution of BSDE (\ref{3.1}).
Then $Z\neq0$ with respect to the product measure $dt\otimes dP$.
\end{thm}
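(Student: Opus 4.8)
The plan is to deduce this ``non-vanishing theorem'' directly from Theorem~\ref{1}, since the two hypotheses there --- $\varphi'\ge 0$, $\varphi'\not\equiv 0$ on $(c,\infty)$, versus $\varphi'\le 0$, $\varphi'\not\equiv 0$ on $(c,\infty)$ --- together exhaust all $\varphi$ satisfying (H.1) and (H.2): indeed (H.2) says $\varphi$ is monotone on $[c,\infty)$, which is exactly the statement that $\varphi'$ has constant sign there, so one of the two cases of Theorem~\ref{1} applies. First I would dispose of the degenerate case: if $\varphi'\equiv 0$ on $(c,\infty)$, then by (H.1) (symmetry about $c$) $\varphi'$ vanishes on $(-\infty,c)$ as well, hence $\varphi$ is constant, and then $u(t,x)\equiv \varphi(c)$ solves (\ref{pde1}) by (A.2), so $(Y_t,Z_t)=(\varphi(c),0)$ and the statement is vacuously about a case excluded by the hypotheses of Theorem~\ref{1}; I would simply note that Theorem~\ref{1} already presupposes $\varphi'\not\equiv 0$ on $(c,\infty)$, so this case need not be addressed.

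Under the standing hypotheses, Theorem~\ref{1} gives $\mathrm{sgn}(Z_t)=\pm\,\mathrm{sgn}(B_t-c)$ for all $t\ge 0$ almost surely. Fix $t\in(0,T]$. Then $Z_t=0$ forces $B_t=c$. But $B_t$ has a continuous (Gaussian) law, so $P(B_t=c)=0$; hence $P(Z_t=0)=0$ for each fixed $t\in(0,T]$. I would then integrate: by Tonelli's theorem applied to the nonnegative measurable function $\mathbf 1_{\{Z_t(\w)=0\}}$ on $[0,T]\times\Omega$,
\[
(dt\otimes dP)\bigl(\{(t,\w):Z_t(\w)=0\}\bigr)=\int_0^T P(Z_t=0)\,dt=\int_0^T 0\,dt=0,
\]
which is precisely the assertion that $Z\neq 0$ with respect to $dt\otimes dP$. (One should also record that the exceptional $dt$-null set $\{0\}$ at the endpoint $t=0$, corresponding to $Z_T=\varphi'(B_T)$, is harmless, and that $Z$ is jointly measurable as $(t,\w)\mapsto w(T-t,B_t(\w))$ with $w$ continuous, so the set in question is genuinely measurable.)

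There is essentially no obstacle here; the content is entirely in Theorem~\ref{1}, and the present statement is a clean measure-theoretic corollary. The only point requiring a word of care is making sure the two sign-hypotheses of Theorem~\ref{1} really cover every admissible $\varphi$ --- i.e.\ that (H.2) is equivalent to $\varphi'$ having a fixed sign on $[c,\infty)$, which is immediate since $\varphi\in C^3$ --- and observing that the set of $t$ with $P(B_t=c)>0$ is empty rather than merely $dt$-null, so that the Tonelli computation gives exactly zero. I would write the argument in two or three sentences: invoke Theorem~\ref{1}, note $P(B_t=c)=0$ for $t>0$, and apply Tonelli.
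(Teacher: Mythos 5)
Your argument is correct and is essentially the paper's own proof: the authors likewise reduce the claim to Theorem \ref{1} via the identity $\{Z\neq 0\}=\{B\neq c\}$ almost surely and the fact that $B\neq c$ holds $dt\otimes dP$-almost everywhere. Your version merely spells out the Tonelli computation and the degenerate case $\varphi'\equiv 0$, which the paper leaves implicit.
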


\begin{proof}
This is a direct consequence of Theorem \ref{1}, as $\left\{ Z\neq0\right\} =\left\{ B\neq c\right\} $
almost surely, $B\neq c$ almost surely with respect to $dt\otimes dP$.
\end{proof}

While the conditions imposed on the initial data $\varphi$ and the
regularity imposed on the non-linear driver $g(t,y,z)$ in Theorem
\ref{1} are too restrictive in applications, which are needed to
achieve a general result, though these conditions are sufficient but
very often not necessary. Here we do not seek for the best conditions
in particular on $\varphi$, and the approach put forward in Theorem
\ref{1} however also applies to situations where the regularity on
the driver $g(t,y,z)$ is not available. These instances however have
to be treated case by case. In this article we deal with an important
example, the $k$-ignorance model, with details.

The non-linear driver is only Lipschitz continuous in the $k$-ignorance
model, which has a unique solution pair $(Y,Z)$ according to Pardoux-Peng
\cite{peng1}. One however can not apply the non-linear Feynman-Kac
formula directly, as the solution $u(t,x)$ to the corresponding parabolic
equation is only $C^{1+}$, but not $C^{2}$ in the variable $x$
in general. Thus the main effort is to derive a non-linear Feynman-Kac
type formula for this case, and generalize the results in Theorem
\ref{lemma1} to the current example.
\begin{thm}
\label{mo} Let $\varphi\in C^{3}(\mathbb{R})$ satisfying (H.1) and
(H.2) with some constant $c$, such that $\varphi$ and $\varphi'$
have at most polynomial growth, and let $u$ be the unique weak solution
to the non-linear parabolic equation
\begin{equation}
\partial_{t}u=\frac{1}{2}\partial_{xx}^{2}u+k|\partial_{x}u|\label{eq:qpe1-1}
\end{equation}
with the initial condition that
\begin{equation}
u(0,x)=\varphi(x).\label{eq:qpe2-1}
\end{equation}
Then $\partial_{x}u(t,x)$ is H$\ddot{o}$lder continuous in any compact
subset of $(0,\infty)\times\mathbb{R}$, and for every $t>0$ and
$x\in\mathbb{R}$
\begin{equation}
\partial_{x}u(t,x)=E\left[N_{t}\varphi'(B_{t}+x)\cdot1_{\left\{ t<\tau\right\} }\right],\label{eq:main-rep-01}
\end{equation}
where
\[
N_{s}=\exp\left[k\int_{0}^{s}\textrm{sgn}(w(t-r,B_{r}+x))dB_{r}-\frac{k^{2}}{2}s\right]
\]
is a martingale for $0\leq s\leq t$, $w(t,x)=\partial_{x}u(t,x)$
is the unique weak solution to the initial value problem of the parabolic
equation
\begin{equation}
\partial_{t}w=\frac{1}{2}\partial_{xx}^{2}w+k\textrm{sgn}(\partial_{x}u(t,x))\cdot\partial_{x}w\label{eq:pde-qa1}
\end{equation}
with the initial condition that
\begin{equation}
w(0,x)=\varphi'(x),\label{eq:qpe2-1-1}
\end{equation}
and $\tau=\inf\left\{ s\geq0:B_{s}+x=c\right\} $. Moreover $Y_{t}=u(T-t,B_{t})$
and $Z_{t}=w(T-t,B_{t})$ is the unique solution pair to the $k$-ignorance
model.
\end{thm}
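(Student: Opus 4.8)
The plan is to prove everything by approximation, reducing to Theorem \ref{lemma1} and Theorem \ref{1} applied to smooth symmetric drivers. Let $g_n\in C^\infty(\mathbb{R})$ be even functions with $g_n(0)=0$, $|g_n'|\le k$, bounded $z$-derivatives of all orders (for each fixed $n$), and $g_n\to k|\cdot|$ uniformly on $\mathbb{R}$ --- for instance a mollification $k\int|z-y/n|\rho(y)\,dy$ of $k|z|$, recentred so that $g_n(0)=0$. Each $g_n$ satisfies (A.1), (A.2) and $g_n(z)=g_n(-z)$, so Theorem \ref{lemma1} applies and yields a solution $u_n\in C^{1,3}$ of $\partial_t u_n=\tfrac12\partial_{xx}u_n+g_n(\partial_x u_n)$, $u_n(0,\cdot)=\varphi$, such that $w_n:=\partial_x u_n$ solves $\partial_t w_n=\tfrac12\partial_{xx}w_n+g_n'(w_n)\,\partial_x w_n$, $w_n(0,\cdot)=\varphi'$, with $w_n(t,c)=0$, and
\[
w_n(t,x)=E\left[N_t^{(n)}\varphi'(B_t+x)\,1_{\{t<\tau\}}\right],\qquad
N_s^{(n)}=\exp\left(\int_0^s b^{(n)}_{r,t}\,dB_r-\tfrac12\int_0^s (b^{(n)}_{r,t})^2\,dr\right),
\]
where $b^{(n)}_{r,t}=g_n'(w_n(t-r,B_r+x))$ (the ``$a$'' term of Theorem \ref{lemma1} is absent because $\partial_y g_n\equiv0$). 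Moreover, by (H.2) $\varphi'$ has a fixed sign on $(c,\infty)$, and unless $\varphi$ is constant (a trivial case where every object vanishes) $\varphi'\not\equiv0$ there, so Theorem \ref{1} gives $\mathrm{sgn}\,w_n(t,x)=\epsilon\,\mathrm{sgn}(x-c)$ for $t>0$, $x\ne c$, with $\epsilon=\pm1$ independent of $n$.

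Next I would derive the estimates needed for a limit passage. Cauchy--Schwarz together with $E[(N_t^{(n)})^2]\le e^{k^2 t}$ (valid since $|b^{(n)}|\le k$) and the polynomial growth of $\varphi'$ gives $|w_n(t,x)|\le C(t)(1+|x|)^p$ uniformly in $n$; integrating in $x$ yields a corresponding local bound for $u_n$. On a compact subset of $(0,\infty)\times\mathbb{R}$ the equation for $w_n$ may be written in divergence form, $\partial_t w_n=\partial_x\left(\tfrac12\partial_x w_n+g_n(w_n)\right)$, with both $w_n$ and the flux $g_n(w_n)$ bounded there uniformly in $n$; hence the De Giorgi--Nash--Moser estimates provide an $n$-uniform interior modulus of continuity for $w_n$ and a local $L^2$ bound for $\partial_x w_n$. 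By Arzel\`a--Ascoli, a subsequence of $w_n$ converges locally uniformly on $(0,\infty)\times\mathbb{R}$ to a H\"older continuous function, which must be $w:=\partial_x u$ where $u_n\to u$ locally uniformly; passing to the limit in the weak formulation, using $g_n(\partial_x u_n)\to|\partial_x u|$ locally uniformly, shows $u$ is a weak solution of (\ref{eq:qpe1-1})--(\ref{eq:qpe2-1}), so by uniqueness the whole sequence converges and $\partial_x u$ is H\"older continuous on compacts of $(0,\infty)\times\mathbb{R}$ --- the first assertion. Since $g_n'(w_n)\partial_x w_n=\partial_x(g_n(w_n))\to\partial_x|w|=\mathrm{sgn}(\partial_x u)\,\partial_x w$ in $\mathcal{D}'$, the function $w$ is a weak solution of (\ref{eq:pde-qa1}) with (\ref{eq:qpe2-1-1}); once the sign of $w$ is known, $\mathrm{sgn}(\partial_x u)$ is a fixed piecewise-constant coefficient, and weak solutions of polynomial growth for such a linear equation are unique.

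To pin down the sign of $w$, note that the limit of the identities above only gives $\mathrm{sgn}\,w(t,x)\in\{0,\epsilon\,\mathrm{sgn}(x-c)\}$. On the half-space $\{x>c\}$, $w$ solves the constant-coefficient linear equation $\partial_t w=\tfrac12\partial_{xx}w+k\epsilon\,\partial_x w$ with $w\ge0$, $w(t,c)=0$, $w(0,\cdot)=\varphi'\ge0$ and $w\not\equiv0$; the strong maximum principle forces $w>0$ on $(0,\infty)\times(c,\infty)$, and symmetrically $w<0$ on $(0,\infty)\times(-\infty,c)$. Thus for $t>0$ the zero set of $w$ is exactly $\{x=c\}$. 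This is the crucial input for the representation: for a.e. $(r,\omega)$ we have $B_r(\omega)+x\ne c$ (Brownian motion spends zero time at any point), whence $b^{(n)}_{r,t}\to k\,\mathrm{sgn}(w(t-r,B_r+x))$ in $dr\otimes dP$; dominated convergence ($|b^{(n)}|\le k$) gives $\int_0^s|b^{(n)}_{r,t}-b_{r,t}|^2\,dr\to0$ a.s., so $N_t^{(n)}\to N_t$ in probability with $N$ as in the statement (using $b^2=k^2$ a.e.) and a genuine martingale on $[0,t]$ by Novikov's criterion. Since $\{N_t^{(n)}\}$ is bounded in every $L^p$ and $\varphi'(B_t+x)$ lies in every $L^q$, the family $N_t^{(n)}\varphi'(B_t+x)1_{\{t<\tau\}}$ is uniformly integrable, and letting $n\to\infty$ in the displayed formula for $w_n$ yields (\ref{eq:main-rep-01}). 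Finally, $(u_n(T-\cdot,B_\cdot),w_n(T-\cdot,B_\cdot))$ solves the BSDE with driver $g_n$ and terminal $\varphi(B_T)$; by the standard stability estimate for BSDEs (uniform Lipschitz constant $k$, $g_n\to k|\cdot|$ uniformly) these converge in $\mathcal{M}(0,T,\mathbb{R})$ to the solution $(Y,Z)$ of the $k$-ignorance model, while the local uniform convergence of $u_n,w_n$ and the moment bounds identify the limits as $u(T-\cdot,B_\cdot)$ and $w(T-\cdot,B_\cdot)$, so $Y_t=u(T-t,B_t)$, $Z_t=w(T-t,B_t)$. The main obstacle is the $n$-uniform interior regularity of $w_n=\partial_x u_n$: the drivers $g_n$ lose all $C^2$-control as $n\to\infty$, so Schauder theory is unavailable with uniform bounds, and one must instead exploit that $w_n$ itself solves a linear equation whose coefficient stays bounded by $k$ uniformly in $n$, invoking De Giorgi--Nash--Moser; a secondary difficulty is justifying the limit passage inside the Cameron--Martin exponential, which rests on the nodal-set description of $w$.
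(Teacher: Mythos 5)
Your proof is correct and follows the same overall strategy as the paper: regularize the driver $k|z|$ by smooth symmetric approximations, apply Theorem \ref{lemma1} to each regularized problem, and pass to the limit. The differences are in the technical execution of the two delicate steps, and in both places your treatment is more explicit than the paper's. For the $n$-uniform interior regularity of $w_n$, the paper works with the non-divergence form $\partial_t w^{\varepsilon}=\tfrac12\partial_{xx}^2w^{\varepsilon}+b_{\varepsilon}\partial_x w^{\varepsilon}$ and invokes Nash--Aronson Gaussian bounds on the fundamental solutions $p_{\varepsilon}$ together with the representation $w^{\varepsilon}(t,x)=\int p_{\varepsilon}(0,x,t,y)\varphi'(y)\,dy$ (see the Remark following the proof), whereas you rewrite the equation in divergence form $\partial_t w_n=\partial_x\bigl(\tfrac12\partial_x w_n+g_n(w_n)\bigr)$ and apply De Giorgi--Nash--Moser directly; both routes rest on the same uniform bound $|g_n'|\le k$ and are equivalent in substance. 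The more significant difference is the limit passage inside the Cameron--Martin exponential: the paper asserts $E[N_t^{\varepsilon_n}\varphi'(B_t+x)1_{\{t<\tau\}}]\to E[N_t\varphi'(B_t+x)1_{\{t<\tau\}}]$ ``by Lebesgue's dominated convergence theorem'' without explaining why $N_t^{\varepsilon_n}\to N_t$, which requires knowing that $g'_{\varepsilon_n}(w^{\varepsilon_n}(t-r,B_r+x))\to k\,\mathrm{sgn}(w(t-r,B_r+x))$ for $dr\otimes dP$-a.e.\ $(r,\omega)$ and that $|\mathrm{sgn}(w)|=1$ a.e.\ along the path (otherwise the compensator in $N_s$ would not be $\tfrac{k^2}{2}s$). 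You supply exactly the missing ingredients: the Harnack/strong-maximum-principle argument showing the nodal set of $w$ is precisely $\{x=c\}$ for $t>0$, and the fact that Brownian motion spends zero Lebesgue time at the level $c$, followed by uniform integrability of $N_t^{(n)}\varphi'(B_t+x)$. So your write-up not only reproduces the paper's argument but closes the one point where the paper's proof is genuinely thin; the only cosmetic caveat is that on $\{x>c\}$ the coefficient $k\,\mathrm{sgn}(w)$ agrees with the constant $k\epsilon$ only off the zero set of $w$, so the maximum-principle step should be phrased via the Harnack inequality for bounded measurable coefficients rather than for a literally constant-coefficient equation.
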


\begin{proof}
According to the theory of parabolic equations \cite{Friedman 1964,Ladyzenskaja Solonnikov and Uralceva},
there is a unique weak solution $u(x,t)$ to the problem (\ref{eq:qpe1-1},
\ref{eq:qpe2-1}), and $\partial_{x}u(x,t)$ is H$\ddot{o}$lder continuous
on any compact subset of $(0,T)\times\mathbb{R}^{d}$. According to
Aronson's estimate and Nash-Moser theory (see \cite{Aronson 1968,Nash 1958}
for details), it follows that the linear problem (\ref{eq:pde-qa1},
\ref{eq:qpe2-1-1}) has a unique weak solution which is H$\ddot{o}$lder
continuous in any compact set of $(0,T)\times\mathbb{R}$.

Next we prove that $Y_{t}=u(T-t,B_{t})$ and $Z_{t}=w(T-t,B_{t})$
are the unique solution pair of BSDE (\ref{1.2}). To this end, for
$\varepsilon\geq0$, let $g_{\varepsilon}(z)=k\sqrt{z^{2}+\varepsilon}$.
For $\varepsilon>0$, $g_{\varepsilon}$ is smooth and $|g_{\varepsilon}(z)-g_{0}(z)|\rightarrow0$
as $\varepsilon\rightarrow0$ for every $z\in\mathbb{R}$. Moreover
$g'_{\varepsilon}(z)=k\frac{z}{\sqrt{z^{2}+\varepsilon}}$ so that
$|g'_{\varepsilon}(z)|\leq k$.

The condition that $\varphi$ possesses at most polynomial growth
is sufficient to ensure the existence of uniqueness of a strong solution
$u^{\varepsilon}(t,x)$ to the problem
\begin{equation}
\partial_{t}u^{\varepsilon}(x,t)=\frac{1}{2}\partial_{xx}^{2}u^{\varepsilon}(t,x)+g_{\varepsilon}(\partial_{x}u^{\varepsilon}(t,x))\label{eq:qpe1}
\end{equation}
together with the initial condition that
\begin{equation}
u^{\varepsilon}(x,0)=\varphi(x),\label{eq:qpe2}
\end{equation}
for every $\varepsilon>0$. According to the regularity theory (see
\cite{Ladyzenskaja Solonnikov and Uralceva}) of quasi-linear parabolic
equations, $u^{\varepsilon}\in C^{1,\infty}((0,\infty)\times\mathbb{R})$,
whose space derivative function $w^{\varepsilon}(t,x)=\partial_{x}u^{\varepsilon}(t,x)$
is the unique weak solution to the (linear) parabolic equation
\begin{equation}
\partial_{t}w^{\varepsilon}(t,x)=\frac{1}{2}\partial_{xx}^{2}w^{\varepsilon}(t,x)+g'_{\varepsilon}(\partial_{x}u^{\varepsilon}(t,x))\cdot\partial_{x}w^{\varepsilon}(t,x)\label{w-pqe3}
\end{equation}
subject to the initial value that
\begin{equation}
w^{\varepsilon}(0,x)=\varphi'(x).\label{w-pqe4}
\end{equation}
By standard theory of parabolic equations, $u^{\varepsilon}\rightarrow u$
as $\varepsilon\rightarrow0$, where $u$ is the unique weak solution
to the initial problem of the parabolic equation, that is the case
where $\varepsilon=0$ for the problem (\ref{eq:qpe1}, \ref{eq:qpe2}).

We note that $g'_{\varepsilon}$ are uniformly bounded by $|k|$,
which is crucial in our argument below, according to Nash's continuity
theory (see \cite{Nash 1958}), the solutions $\left\{ w^{\varepsilon}(t,x)\right\} $
are uniformly Hölder continuous in any compact subset of $(0,\infty)\times\mathbb{R}$,
and bounded in $L^{2}([0,T],H_{\textrm{loc}}^{1})$ (where $H^{1}$
is the usual Sobolev space), so that we may extract, if necessary,
a sequence $\varepsilon_{n}\downarrow0$, such that $w^{\varepsilon_{n}}(t,x)$
converges to $w(t,x)$ point-wise, uniform in any compact subset of
$(0,T]\times\mathbb{R}$, and $w^{\varepsilon_{n}}$ converges weakly
to $w$ in $L^{2}([0,T],H_{\textrm{loc}}^{1})$. For every $\varepsilon>0$,
$w^{\varepsilon}$ is a strong solution to (\ref{w-pqe3}) so that,
for every $\rho(x,t)$ with a compact support in $[0,T)\times\mathbb{R}$,
by integration by parts
\[
-\int_{\mathbb{R}}\rho(x,0)\varphi'(x)=-\frac{1}{2}\int_{\mathbb{R}\times[0,T)}\partial_{x}\rho(x,t)\partial_{x}w^{\varepsilon}(t,x)+\int_{\mathbb{R}\times[0,T)}\rho(x,t)g'_{\varepsilon}(\partial_{x}u^{\varepsilon}(t,x))\cdot\partial_{x}w^{\varepsilon}(t,x).
\]
Letting $\varepsilon\rightarrow0$, we therefore obtain that
\[
-\int_{\mathbb{R}}\rho(x,0)\varphi'(x)=-\frac{1}{2}\int_{\mathbb{R}\times[0,T)}\partial_{x}\rho(x,t)\partial_{x}w(t,x)+\int_{\mathbb{R}\times[0,T)}\rho(x,t)k\textrm{sgn}(w(t,x))\cdot\partial_{x}w(t,x)
\]
which implies that $w(x,t)$ is the unique weak solution to the problem
(\ref{eq:pde-qa1}, \ref{eq:qpe2-1-1}).

Since for every $n$, according to It$\hat{o}$'s formula
\begin{equation}
Y_{t}^{n}=\varphi(B_{T})+\int_{t}^{T}g_{\varepsilon_{n}}(Z^{n})ds-\int_{t}^{T}Z_{s}^{n}dB_{s}\label{1.2-1}
\end{equation}
where $Z^{\varepsilon_{n}}=w^{\varepsilon_{n}}(T-\cdot,B_{\cdot})\rightarrow w(T-\cdot,B_{\cdot})$
and $Y_{t}^{n}=u^{\varepsilon_{n}}(T-t,B_{t})\rightarrow u(T-t,B_{t})$
as $n\rightarrow\infty$, and therefore $Y_{t}=u(T-t,B_{t})$ and
$Z_{t}=w(T-t,B_{t})$ are the unique solution pair of BSDE
\[
Y_{t}=\varphi(B_{T})+\int_{t}^{T}k|Z_{s}|ds-\int_{t}^{T}Z_{s}dB_{s}\quad\textrm{ for }0\leq t\leq T.
\]

Since $\varphi\in C^{3}(\mathbb{R})$ with polynomial growth, so that
we may apply Theorem \ref{lemma1} to $u^{\varepsilon}(t,x)$. Thus
for each $\varepsilon>0$, $w^{\varepsilon}(t,c)=0$ for all $t\geq0$,
and
\begin{equation}
w^{\varepsilon}(t,x)=E\left[N_{t}^{\varepsilon}\varphi'(B_{t}+x)\cdot I_{\left\{ t<\tau\right\} }\right]\label{rep-q1}
\end{equation}
where $\tau=\inf\left\{ s\geq0:B_{s}+x=c\right\} $ and
\[
N_{t}^{\varepsilon}=\exp\left[\int_{0}^{t}g'_{\varepsilon}(w^{\varepsilon}(t-s,B_{s}+x))dB_{s}-\frac{1}{2}\int_{0}^{t}|g'_{\varepsilon}(w^{\varepsilon}(t-s,B_{s}+x))|^{2}ds\right].
\]
So by Lebesgue's dominated convergence theorem,
\[
E\left[N_{t}^{\varepsilon_{n}}\varphi'(B_{t}+x)\cdot I_{\left\{ t<\tau\right\} }\right]\rightarrow E\left[N_{t}\varphi'(B_{t}+x)\cdot I_{\left\{ t<\tau\right\} }\right]
\]
as $n\rightarrow\infty$, and we may conclude that
\[
w(t,x)=E\left[N_{t}\varphi'(B_{t}+x)\cdot I_{\left\{ t<\tau\right\} }\right]
\]
which completes the proof.
\end{proof}
\begin{rem}
Let us supply some details we omitted in the proof in applying Nash's
theory and Aronson's estimates to our situation, in proving that,
we may extract a sequence $\varepsilon_{n}\downarrow0$, so that $w^{\varepsilon_{n}}(t,x)\rightarrow w(t,x)$,
where $w$ is the unique weak solution to the problem (\ref{eq:pde-qa1},
\ref{eq:qpe2-1-1}). Let us use the same notations in the previous
proof, but for simplicity $b_{\varepsilon}(t,x)=g'_{\varepsilon}(\partial_{x}u^{\varepsilon}(t,x))$
for every $\varepsilon>0$, and $b_{0}(t,x)=k\textrm{sgn}(\partial_{x}u(t,x))$.
Then $|b_{\varepsilon}(t,x)|\leq|k|$, a bound dependent of $\varepsilon$.
Then, according to Nash \cite{Nash 1958} and Aronson \cite{Aronson 1968},
the fundamental solution $p_{\varepsilon}(s,x,t,y)$ to the parabolic
equation
\begin{equation}
\left(\partial_{t}-\frac{1}{2}\Delta-b_{\varepsilon}(t,x)\right)v=0\textrm{ in }(0,\infty)\times\mathbb{R}\label{eq:par-s1}
\end{equation}
is jointly $\alpha$-Hölder continuous for some $\alpha$ depending
only on $|k|$ (see page 328, Friedman \cite{Friedman 1964} or Nash
\cite{Nash 1958}), $p_{\varepsilon}(s,x,t,y)$ satisfying a Gaussian
lower and upper bounds uniformly in $\varepsilon\geq0$, for $0\leq s<t\leq T$,
$x,y\in\mathbb{R}$. This implies that $p_{\varepsilon}(s,x,t,y)$
is $\alpha$-Hölder continuous in all its arguments, where $\alpha$
and Hölder constant depend only on $|k|$ but independent of $\varepsilon>0$.
According to Aronson \cite{Aronson 1968}, the unique weak solution
$w^{\varepsilon}(t,x)$ to (\ref{w-pqe3}, \ref{w-pqe4}) (for all
$\varepsilon\geq0$) has the representation
\begin{equation}
w^{\varepsilon}(t,x)=\int_{\mathbb{R}}p_{\varepsilon}(0,x,t,y)\varphi'(y)dy.\label{eq:par-s2}
\end{equation}
Note also that $\left\{ p_{\varepsilon}(s,x,t,y)\right\} $ is a family
of equi-continuous functions on any compact set of $0\leq s<t$, $x,y\in\mathbb{R}$.
Of course $p_{\varepsilon}(s,x,t,y)$ depends on the solution $w^{\varepsilon}$,
which is a necessary feature for non-linear PDEs, but this does not
cause any difficulty for us. Hence, by extracting a sequence, we can
assume that $p_{\varepsilon_{n}}(s,x,t,y)$ converges on $\{0\leq s<t\}\times\mathbb{R}^{2}$
to $p(s,x,t,y)$, uniformly on any its compact subset, and therefore
$w^{\varepsilon_{n}}(t,x)$ converges to $w(t,x)$ on $(0,\infty)\times\mathbb{R}$,
and uniformly on any its compact subset, so that $p(s,x,t,y)=p_{0}(s,x,t,y)$
and $w(t,x)=w^{0}(t,x)$.

\end{rem}

\begin{cor}
Suppose that $\varphi\in C^{3}(\mathbb{R})$ satisfies (H.1) and (H.2)
with some constant $c$, such that $\varphi$ and $\varphi'$ have
at most polynomial growth, and suppose that $(Y_{t},Z_{t})$ is the
unique solution of BSDE:
\begin{equation}
Y_{t}=\varphi(B_{T})+\int_{t}^{T}k|Z_{s}|ds-\int_{t}^{T}Z_{s}dB_{s},\label{3.2}
\end{equation}
where $k$ is a real constant.
\begin{description}
\item [{{(1)}}] If $\varphi'\geq0$ and $\varphi'\not\equiv0$ on $(c,\infty)$,
then
\[
sgn(Z_{t})=sgn(B_{t}-c),\;\;t\ge0.
\]
\item [{{(2)}}] If $\varphi'\leq0$ and $\varphi'\not\equiv0$ on $(c,\infty)$,
then
\[
sgn(-Z_{t})=sgn(B_{t}-c),\;\;t\ge0.
\]
\end{description}
\end{cor}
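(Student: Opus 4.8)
The plan is to obtain the corollary from Theorem \ref{mo} exactly as Theorem \ref{1} was obtained from Theorem \ref{lemma1}; the only genuinely new ingredient is that the symmetry of $w(t,\cdot)$ about $c$ must now be established directly for the weak solution, since Theorem \ref{mo} (unlike Theorem \ref{lemma1}) does not record it.

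First I would invoke Theorem \ref{mo}: the unique solution pair of BSDE (\ref{3.2}) is $Y_t=u(T-t,B_t)$, $Z_t=w(T-t,B_t)$, where $w=\partial_x u$ is the weak solution of (\ref{eq:pde-qa1}, \ref{eq:qpe2-1-1}) and, for all $t>0$ and $x\in\mathbb{R}$,
\[
w(t,x)=E\left[N_t\,\varphi'(B_t+x)\,1_{\{t<\tau\}}\right],\qquad \tau=\inf\{s\ge0:B_s+x=c\},
\]
with $N_s=\exp\!\big[k\int_0^s\mathrm{sgn}(w(t-r,B_r+x))\,dB_r-\tfrac{k^2}{2}s\big]>0$. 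So it suffices to show that in case (1) one has $w(t,x)>0$ for $x>c$, $w(t,x)<0$ for $x<c$, and $w(t,c)=0$ (and the mirror statement in case (2)); since $\{B_t=c\}$ is $P$-null, the sign assertion $\mathrm{sgn}(Z_t)=\mathrm{sgn}(B_t-c)$ follows. For the symmetry I would mimic the proof of part (i) of Theorem \ref{lemma1}: because $|\cdot|$ is even, both $(t,x)\mapsto u(t,c+x)$ and $(t,x)\mapsto u(t,c-x)$ are weak solutions of (\ref{eq:qpe1-1}) with initial data $\varphi(c+x)$ and $\varphi(c-x)$, which agree by (H.1); uniqueness of the weak solution then gives $u(t,c+x)=u(t,c-x)$, and differentiating in $x$ yields $w(t,c+x)=-w(t,c-x)$, in particular $w(t,c)=0$. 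Hence it is enough to fix the sign of $w$ on $(c,\infty)$.

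To do that, take $x>c$. On $\{t<\tau\}$ the continuous path $s\mapsto B_s+x$ starts strictly above $c$ and never hits $c$ on $[0,t]$, so it stays in $(c,\infty)$; in case (1), $\varphi'\ge0$ on $(c,\infty)$, so $N_t\varphi'(B_t+x)1_{\{t<\tau\}}\ge0$ a.s.\ (recall $N_t>0$), and thus $w(t,x)\ge0$. The main obstacle --- mild, but the only step needing an argument --- is upgrading this to $w(t,x)>0$. For that I would use $\varphi'\not\equiv0$ on $(c,\infty)$ together with continuity of $\varphi'$ to pick $y_0>c$ and $\delta>0$ with $(y_0-\delta,y_0+\delta)\subset(c,\infty)$ and $\varphi'>0$ on this interval; by the support theorem for Brownian motion the event $\{B_s+x>c\ \forall s\le t\}\cap\{B_t+x\in(y_0-\delta,y_0+\delta)\}$ has positive probability, and on it the integrand is strictly positive, whence $w(t,x)>0$. (No change of measure is needed, since $N_t>0$ pointwise and $\varphi'(B_t+x)1_{\{t<\tau\}}$ is already nonnegative and strictly positive with positive $P$-probability.) Combining with the odd symmetry about $c$ gives $\mathrm{sgn}(w(t,x))=\mathrm{sgn}(x-c)$ for $t>0$, hence $\mathrm{sgn}(Z_t)=\mathrm{sgn}(B_t-c)$. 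Part (2) is the same argument applied to $-\varphi'$, giving $w(t,x)<0$ for $x>c$ and $w(t,x)>0$ for $x<c$, i.e.\ $\mathrm{sgn}(-Z_t)=\mathrm{sgn}(B_t-c)$.
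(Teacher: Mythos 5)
Your proposal is correct and follows exactly the route the paper intends: the paper's own proof of this corollary is the single line ``the conclusions follow from Theorem \ref{mo} immediately,'' with the sign determination via the representation $w(t,x)=E[N_t\varphi'(B_t+x)1_{\{t<\tau\}}]$ carried out just as in the proof of Theorem \ref{1}. You in fact supply details the paper omits --- the odd symmetry of the weak solution $w$ about $c$ and the positive-probability argument upgrading $w(t,x)\ge0$ to $w(t,x)>0$ --- both of which are sound.
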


\begin{proof}
The conclusions follows from Theorem \ref{mo} follow now immediately.
\end{proof}

\section{Explicit solutions for some BSDEs}

\label{section 3}

Firstly, Theorem \ref{mo} allows us to work out the explicit solution
of BSDE(\ref{3.2}). To this end, we recall the joint distribution
$P(B_{t}\in dx,L_{t}^{\ell}\in dy)$ of $B_{t}$ and its local time
$L_{t}^{\ell}$ with respect to $\ell$ given by
\begin{equation}
\begin{split}P(B_{t}\in dx,L_{t}^{\ell}\in dy) & =\frac{1}{\sqrt{2\pi t^{3}}}(y+|x-\ell|+|\ell|)\exp\bigg\{\frac{-(y+|x-\ell|+|\ell|)^{2}}{2t}\bigg\}1_{\{y>0\}}dxdy\\
 & \quad+\frac{1}{\sqrt{2\pi t}}\left[\exp\left\{ -\frac{x^{2}}{2t}\right\} -\exp\left\{ -\frac{(|x-\ell|+|\ell|)^{2}}{2t}\right\} \right]1_{\{y=0\}}dxdy,
\end{split}
\label{joint}
\end{equation}
see \cite{bor} for example.
\begin{thm}
\label{2} Suppose $\varphi\in C^{1}(\mathbb{R})$ satisfying (H.1)
and (H.2) such that $\varphi(B_{T})$ and $\varphi'(B_{T})$ are square
integrable. Then the unique solution of BSDE (\ref{3.2}) is given
by
\begin{equation}
Y_{t}=H(B_{t}),\quad Z_{t}=\partial_{h}H(B_{t}),\label{Y_t-Z_t}
\end{equation}
where $H$ is defined in the following.
\begin{description}
\item [{{(i)}}] If $\varphi'\geq0$ and $\varphi'\not\equiv0$ on $(c,\infty)$,
then
\[
H(h)=e^{-\frac{1}{2}k^{2}(T-t)}\bigg\{\int_{\mathbb{R}}\int_{y\geq0}\varphi(x+h)e^{k|x-c+h|-k|c-h|-ky}P(B_{T-t}\in dx,L_{T-t}^{c-h}\in dy)\bigg\}.
\]
\item [{{(ii)}}] If $\varphi'\leq0$ and $\varphi'\not\equiv0$ on $(c,\infty)$,
then
\begin{equation}
H(h)=e^{-\frac{1}{2}k^{2}(T-t)}\bigg\{\int_{\mathbb{R}}\int_{y\geq0}\varphi(x+h)e^{-k|x-c+h|+k|c-h|+ky}P(B_{T-t}\in dx,L_{T-t}^{c-h}\in dy)\bigg\}.\label{h_z2}
\end{equation}
\end{description}
\end{thm}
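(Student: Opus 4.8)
The plan is to combine the non-linear Feynman--Kac representation from Theorem \ref{mo} with the Cameron--Martin change of measure and the known joint law \eqref{joint} of Brownian motion and its local time. First I would fix $t$ and write, as given by Theorem \ref{mo}, $Z_t = w(T-t,B_t)$ and $Y_t = u(T-t,B_t)$, so that it suffices to produce closed-form expressions for $u(T-t,h)$ and $\partial_h u(T-t,h)$ at an arbitrary point $h$. The key simplification comes from the sign information: under (H.1), (H.2) and the hypothesis $\varphi'\ge 0$ on $(c,\infty)$, the Corollary gives $\mathrm{sgn}(\partial_x u(s,y)) = \mathrm{sgn}(y-c)$ for all $s>0$, so the random integrand $k\,\mathrm{sgn}(w(t-r,B_r+x))$ appearing in the martingale $N_s$ of Theorem \ref{mo} is simply $k\,\mathrm{sgn}(B_r + x - c)$ — a \emph{deterministic} functional of the Brownian path, no longer depending on the unknown $u$. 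This is what turns the representation into something genuinely explicit.

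Next I would carry out the Cameron--Martin/Girsanov computation. With $x = h$ and $\tau = \inf\{s: B_s + h = c\}$, Tanaka's formula gives
\[
\int_0^t \mathrm{sgn}(B_r + h - c)\,dB_r = |B_t + h - c| - |h - c| - L_t^{\,c-h},
\]
where $L_t^{\,c-h}$ is the local time of $B$ at level $c-h$ (equivalently the local time of $B_\cdot + h$ at $c$). Hence
\[
N_t = \exp\!\Big\{ k\big(|B_t + h - c| - |h-c| - L_t^{\,c-h}\big) - \tfrac{k^2}{2}t\Big\}.
\]
Substituting this into $\partial_x u(T-t,h) = E\big[N_{T-t}\,\varphi'(B_{T-t}+h)\,1_{\{T-t<\tau\}}\big]$ and, in parallel, writing the corresponding representation for $u(T-t,h)$ itself (obtained either by the analogous Feynman--Kac formula for $u$ with driver $k|\partial_x u|$, now linearized to $k\,\mathrm{sgn}(B_\cdot+h-c)\,\partial_x u$, or by integrating $\partial_h H$), I would then observe that on the event $\{T-t<\tau\}$ the local time $L_{T-t}^{\,c-h}$ vanishes, so the indicator can be dropped once one integrates against the joint law: the atom at $y=0$ in \eqref{joint} is precisely the contribution of paths that have not hit $c-h$, and for those paths $e^{-ky}=1$ and the sign of $x + h - c$ is constant, matching $\mathrm{sgn}(h-c)$. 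Evaluating the expectation as an integral $\int_{\mathbb{R}}\int_{y\ge 0}(\cdots)\,P(B_{T-t}\in dx, L_{T-t}^{\,c-h}\in dy)$ yields exactly the stated formula for $H(h)$ in case (i); case (ii) is identical with $k$ replaced by $-k$, since then $\mathrm{sgn}(\partial_x u) = -\mathrm{sgn}(x-c)$ flips the sign in the exponent.

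Finally I would verify the differentiation step: one must check that $\partial_h H(h)$ computed from the integral formula indeed reproduces $\partial_x u(T-t,h)$, i.e. that differentiation in $h$ can be carried out under the integral sign and that the boundary term from differentiating $1_{\{T-t<\tau\}}$ (equivalently, from the $h$-dependence of the level $c-h$ in the local-time law) contributes nothing — this is guaranteed because $w(s,c)=0$, which is the nodal-set statement, so the stopped-martingale argument of Theorem \ref{lemma1}(iii) already absorbed that term. The main obstacle I anticipate is bookkeeping rather than conceptual: keeping the three shifted reference levels consistent ($B_\cdot + h$ versus $B_\cdot$, level $c$ versus level $c-h$), correctly matching the $e^{k|x-c+h| - k|c-h| - ky}$ factor against $N_{T-t}$ after the change of variables, and justifying the $L^2$/dominated-convergence estimates that let one pass from the smooth approximations $u^\varepsilon$ (where everything above is classical) to the weak solution $u$ — but all of these are already essentially established in the proofs of Theorems \ref{lemma1} and \ref{mo}, so the argument here is mostly assembly. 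Under the weaker regularity $\varphi\in C^1$ stated in Theorem \ref{2}, one additionally approximates $\varphi$ by $C^3$ functions with polynomially bounded derivatives, applies the above to each, and passes to the limit using square-integrability of $\varphi(B_T)$ and $\varphi'(B_T)$.
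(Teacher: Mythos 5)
Your proposal is correct and follows essentially the same route as the paper: use the nodal-set result of Theorem \ref{mo} to replace $k|Z_s|$ by $k\,\mathrm{sgn}(B_s-c)Z_s$, solve the resulting linear BSDE by a Cameron--Martin/Girsanov exponential, convert $\int\mathrm{sgn}(B_s-c)\,dB_s$ via Tanaka's formula into $|B_T-c|-|c|-L_T^{c}$, and evaluate the expectation against the joint law \eqref{joint}, finally obtaining $Z_t=\partial_h H(B_t)$. The only difference is cosmetic: the paper computes $Y_t$ directly as a conditional expectation and then differentiates, whereas you also track the killed representation of $w$ in parallel, which is not needed for the statement.
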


\begin{proof}
(i) Since $\varphi'\geq0$ and $\varphi'\not\equiv0$ on $(c,\infty)$,
by Theorem \ref{mo}, $\textrm{sgn}(Z_{t})=\textrm{sgn}(B_{t}-c)$,
BSDE (\ref{3.2}) can be rewritten as a linear BSDE in $Z$
\[
Y_{t}=\varphi(B_{T})+k\int_{t}^{T}\textrm{sgn}(B_{t}-c)Z_{s}ds-\int_{t}^{T}Z_{s}dB_{s}
\]
whose solution is given by
\begin{equation}
Y_{t}=E\left[\varphi(B_{T})\cdot e^{-\frac{1}{2}k^{2}(T-t)+k\int_{t}^{T}\textrm{sgn}(B_{s}-c)dB_{s}}\bigg|\mathcal{F}_{t}\right].\label{e-1-1}
\end{equation}
By Tanaka's Formula,

\[
\int_{0}^{T}\textrm{sgn}(B_{s}-c)dB_{s}=|B_{T}-c|-|c|-L_{T}^{c}.
\]
Combine with (\ref{e-1-1}), we have
\begin{equation}
\begin{split}Y_{0} & =E_{P}\left[\varphi(B_{T})e^{-\frac{1}{2}k^{2}T+k\int_{0}^{T}\textrm{sgn}(B_{s}-c)dB_{s}}\right]\\
 & =E_{P}\left[\varphi(B_{T})\cdot e^{-\frac{1}{2}k^{2}T+k\left(|B_{T}-c|-|c|-L_{T}^{c}\right)}\right]\\
 & =e^{-\frac{1}{2}k^{2}T}\int_{\mathbb{R}}\int_{y\geq0}\varphi(x)\exp\left\{ k|x-c|-k|c|-ky\right\} P(B_{T}\in dx,L_{T}^{c}\in dy).
\end{split}
\label{Y0}
\end{equation}
and
\begin{eqnarray*}
Y_{t} & = & e^{-\frac{1}{2}k^{2}(T-t)}\int_{\mathbb{R}}\int_{y\geq0}\varphi(x+h)\exp\left\{ k|x-c+h|-k|c-h|-ky\right\} P(B_{T-t}\in dx,L_{T-t}^{c-h}\in dy)|_{h=B_{t}}\\
 & = & H(B_{t}).
\end{eqnarray*}
According to Theorem \ref{mo}, $Z_{t}=\partial_{h}H(B_{t})$.

(ii) Similarly, if $\varphi'\leq0$ and $\varphi'\not\equiv0$ on
$(c,\infty)$, by Theorem \ref{mo} we have $\textrm{sgn}(Z_{t})=-\textrm{sgn}(B_{t}-c)$.
The rest can be proved in a similar manner as (i). The proof is completed.
\end{proof}
Another application of Theorem \ref{1} is to prove that the following
different PDEs have the same solution under some assumptions on initial
value $\varphi.$

Let $v$ and $u$ be the solutions of the following initial value
problems of parabolic equations
\begin{equation}
\begin{cases}
 & \partial_{t}v(t,x)=\frac{1}{2}\partial_{xx}^{2}v(t,x)-k\cdot\textrm{sgn}(x-c)\partial_{x}v(t,x)\\
 & v(0,x)=\varphi(x),
\end{cases}\label{v1}
\end{equation}
and
\begin{equation}
\begin{cases}
 & \partial_{t}u=\frac{1}{2}\partial_{xx}^{2}u+\underset{|m|\leq k}{\min}(m\;\partial_{x}u)\\
 & u(0,x)=\varphi(x),
\end{cases}\label{v2}
\end{equation}
respectively.

Shreve \cite{shr} show that for $\varphi(x)=x^{2}$, then $v(t,x)=u(t,x)$
for $c=0$. The following Corollary implies that both solutions may
be same for all symmetric functions $\varphi$ satisfying (H.1) and
$\varphi$ is increasing on $(c,\infty)$.
\begin{cor}
Assume that $\varphi$ satisfies (H.1), (H.2), moreover, $\varphi'\geq0$
on $(c,\infty)$ and $\varphi'\not\equiv0$ on $(c,\infty).$ Then
PDE (\ref{v1}) and PDE (\ref{v2}) have the same solution ,that is,
$v(t,x)=u(t,x)$ for all $(t,x)\in\mathbb{R}_{+}\times\mathbb{R}$.
\end{cor}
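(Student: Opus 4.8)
The plan is to identify the nonlinear PDE \eqref{v2} with the $k$-ignorance equation \eqref{eq:qpe1-1} and then use Theorem \ref{mo} together with the sign information on $Z_t$ to reduce the latter to the linear PDE \eqref{v1}. First I would observe that $\min_{|m|\le k}(m\,\partial_x u) = -k|\partial_x u|$, so that the PDE \eqref{v2} is precisely
\[
\partial_t u = \tfrac12\partial_{xx}^2 u - k|\partial_x u|,\qquad u(0,x)=\varphi(x).
\]
This is the $k$-ignorance parabolic equation \eqref{eq:qpe1-1} with $k$ replaced by $-k$ (equivalently, with the sign convention of the driver $g(z) = -k|z|$, which is still symmetric in $z$). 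By Theorem \ref{mo} applied with this driver, $u(t,x)$ is the unique weak solution, $w=\partial_x u$ is Hölder continuous, and $Y_t = u(T-t,B_t)$, $Z_t = w(T-t,B_t)$ solve the corresponding BSDE $Y_t = \varphi(B_T) - \int_t^T k|Z_s|\,ds - \int_t^T Z_s\,dB_s$.

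The key step is then to invoke the sign result. Since $\varphi$ satisfies (H.1), (H.2) with $\varphi'\ge 0$ and $\varphi'\not\equiv 0$ on $(c,\infty)$, the Corollary following Theorem \ref{mo} (applied with the appropriate sign of $k$) gives $\mathrm{sgn}(w(t,x)) = \mathrm{sgn}(x-c)$ for $t>0$. Hence on $(0,\infty)\times\mathbb{R}$ we have $|\partial_x u(t,x)| = \mathrm{sgn}(x-c)\,\partial_x u(t,x)$ pointwise (with the understanding that both sides vanish at $x=c$, where $w(t,c)=0$). Substituting this identity into \eqref{v2} shows that $u$ is a weak solution of
\[
\partial_t u = \tfrac12\partial_{xx}^2 u - k\,\mathrm{sgn}(x-c)\,\partial_x u,\qquad u(0,x)=\varphi(x),
\]
which is exactly the linear PDE \eqref{v1}. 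Since \eqref{v1} is a linear uniformly parabolic equation with bounded measurable drift $-k\,\mathrm{sgn}(x-c)$, it has a unique (weak) solution in the relevant class by standard theory (Aronson's estimates / Nash--Moser, as used in the proof of Theorem \ref{mo}); therefore $v = u$ on $\mathbb{R}_+\times\mathbb{R}$.

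The main obstacle is making the substitution $|\partial_x u| = \mathrm{sgn}(x-c)\,\partial_x u$ rigorous at the level of weak solutions and uniformly down to $t=0$: the sign identity from the Corollary holds for $t>0$, and $\partial_x u$ is only Hölder (not classical) across $x=c$, so one must argue that the set $\{x=c\}$ is negligible for the weak formulation (it is a null set in $x$) and that no boundary term is lost there — this is exactly the role played by $w(t,c)=0$, which controls the flux across the line $x=c$. One should also double-check the sign bookkeeping: \eqref{v2} corresponds to the driver $-k|z|$, so the relevant instance of Theorem \ref{mo} and its Corollary is the one for $-k$ in place of $k$; under (H.2) with $\varphi$ increasing past $c$, the conclusion $\mathrm{sgn}(w) = \mathrm{sgn}(x-c)$ is unaffected by this sign change since the sign of $Z$ in the Corollary does not depend on the sign of $k$. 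Once these points are settled, uniqueness for the linear problem \eqref{v1} closes the argument.
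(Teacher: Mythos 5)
Your proposal is correct and follows essentially the same route as the paper: rewrite (\ref{v2}) as $\partial_{t}u=\frac{1}{2}\partial_{xx}^{2}u-k|\partial_{x}u|$, apply Theorem \ref{mo} to get $\textrm{sgn}(\partial_{x}u(t,x))=\textrm{sgn}(x-c)$, and substitute to recover the linear equation (\ref{v1}). The only difference is that you spell out the sign bookkeeping for the driver $-k|z|$ and the weak-solution/uniqueness details that the paper's proof leaves implicit under ``the claim follows immediately.''
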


\begin{proof}
It is obvious that PDE (\ref{v2}) is equivalent to the following
PDE:
\[
\begin{cases}
 & \partial_{t}u=\frac{1}{2}\partial_{xx}^{2}u-k\cdot|\partial_{x}u|\\
 & u(0,x)=\varphi(x).
\end{cases}
\]
As we have proven in Theorem \ref{mo}, $Y_{t}=u(T-t,B_{t})$, $Z_{t}=\partial_{x}u(T-t,B_{t})$
is the unique solution pair of the following BSDE:
\[
Y_{t}=\varphi(B_{T})-k\int_{t}^{T}|Z_{s}|ds-\int_{t}^{T}Z_{s}dB_{s}
\]
By Theorem \ref{mo}, when $\varphi$ satisfies (H.1) and (H.2), $\varphi'\not\equiv0$
on $(c,\infty)$, $\textrm{sgn}(Z_{t})=\textrm{sgn}(B_{t}-c)$, which
means $\textrm{sgn}(\partial_{x}u(t,x))=\textrm{sgn}(x-c)$. Thus
the claim follows immediately.
\end{proof}

\section{Applications}

\label{section-4}

In this section, we give several examples to show how to get the explicit
solution of BSDE for the cases where $\varphi(x)=x^{2}$ and $\varphi(x)=I_{\{a\leq x\leq b\}}.$
\begin{example}
\label{example-1} The explicit solution $(Y_{t},Z_{t})$ of BSDE
\begin{equation}
Y_{t}=B_{T}^{2}+k\int_{t}^{T}|Z_{s}|ds-\int_{t}^{T}Z_{s}dB_{s}.\label{4.1}
\end{equation}
is given by the following formulas:
\begin{equation}
\begin{split}Y_{t}= & \frac{1}{2k^{2}}+\sqrt{\frac{T-t}{2\pi}}\left[|B_{t}|+k(T-t)+\frac{1}{k}\right]\exp\bigg\{-\frac{[|B_{t}|+k(T-t)]^{2}}{2(T-t)}\bigg\}\\
 & %\quad\quad\quad+
+\bigg\{[|B_{t}|+k(T-t)]^{2}+(T-t)-\frac{1}{2k^{2}}\bigg\}\Phi\left(\frac{|B_{t}|+k(T-t)}{\sqrt{T-t}}\right)\\
 & +e^{-2k|B_{t}|}(|B_{t}|+T-t-\frac{1}{2k^{2}})\Phi\left(-\frac{|B_{t}|-k(T-t)}{\sqrt{T-t}}\right)
\end{split}
\label{x2-yt}
\end{equation}
and
\begin{equation}
\begin{split}Z_{t}= & \sqrt{\frac{T-t}{2\pi}}\cdot\textrm{sgn}(B_{t})\cdot\exp\left\{ -\frac{[|B_{t}|+k(T-t)]^{2}}{2(T-t)}\right\} \cdot\left\{ 1+\left[|B_{t}|+k(T-t)+\frac{1}{k}\right]\cdot\left[-\frac{[|B_{t}|+k(T-t)]}{(T-t)}\right]\right\} \\
 & %\quad\quad\quad+
+2\textrm{sgn}(B_{t})\cdot[|B_{t}|+k(T-t)]\cdot\Phi\left(\frac{|B_{t}|+k(T-t)}{\sqrt{T-t}}\right)\\
 & +\left\{ [|B_{t}|+k(T-t)]^{2}-k(T-t)-\frac{1}{2k^{2}}\right\} \frac{\textrm{sgn}(B_{t})}{\sqrt{2\pi(T-t)}}\exp\left\{ -\frac{[|B_{t}|+k(T-t)]^{2}}{2(T-t)}\right\} \\
 & +e^{-2k|B_{t}|}\textrm{sgn}(B_{t})\Phi\left(-\frac{|B_{t}|-k(T-t)}{\sqrt{T-t}}\right)\lt[-2k\lt(|B_{t}|+T-t-\frac{1}{2k^{2}}\rt)+1\rt]\\
 & -e^{-2k|B_{t}|}\left(|B_{t}|+T-t-\frac{1}{2k^{2}}\right)\frac{\textrm{sgn}(B_{t})}{\sqrt{2\pi(T-t)}}\exp\left\{ -\frac{[|B_{t}|-k(T-t)]^{2}}{2(T-t)}\right\} .
\end{split}
\label{x2-zt}
\end{equation}
Here and in the sequel, $\Phi$ is the standard normal cdf.
\end{example}

\begin{proof}
\ By Theorem \ref{2},
\[
Y_{t}=e^{-\frac{1}{2}k^{2}(T-t)}\bigg\{\int_{\mathbb{R}}\int_{y\geq0}(x+h)^{2}e^{k|x+h|-k|h|-ky}P(B_{T-t}\in dx,L_{T-t}^{-h}\in dy)\bigg\}\bigg|_{h=B_{t}}.
\]
By an elementary calculation, we have
\begin{equation}
\begin{split}Y_{t}= & \frac{1}{2k^{2}}+\sqrt{\frac{T-t}{2\pi}}\left[|B_{t}|+k(T-t)+\frac{1}{k}\right]\exp\bigg\{-\frac{[|B_{t}|+k(T-t)]^{2}}{2(T-t)}\bigg\}\\
 & %\quad\quad\quad+
+\bigg\{[|B_{t}|+k(T-t)]^{2}+(T-t)-\frac{1}{2k^{2}}\bigg\}\Phi\left(\frac{|B_{t}|+k(T-t)}{\sqrt{T-t}}\right)\\
 & +e^{-2k|B_{t}|}(|B_{t}|+T-t-\frac{1}{2k^{2}})\Phi\left(-\frac{|B_{t}|-k(T-t)}{\sqrt{T-t}}\right)
\end{split}
\label{x2-yt}
\end{equation}
Therefore, by Theorem \ref{mo}, we have $Z_{t}=\partial_{h}H(B_{t})$,
and therefore we obtain $Z_{t}$ as equation (\ref{x2-zt}). The proof
is completed.
\end{proof}
It is interesting that our result may be also used to get the solution
of the following PDE, its application can be found in \cite{shreve}
for $k=-1$.
\begin{example}
The unique weak solution to the initial problem of the parabolic equation
\[
\begin{cases}
 & \partial_{t}u(t,x)=\frac{1}{2}\partial_{xx}^{2}u(t,x)+k\cdot\textrm{sgn}(x)\partial_{x}u(t,x),\\
 & \lim\limits _{t\to0^{+}}u(t,x)=x^{2},
\end{cases}
\]
is given as the following
\[
\begin{split}u(t,x)= & \frac{1}{2k^{2}}+\sqrt{\frac{t}{2\pi}}(|x|+kt+\frac{1}{k})\exp\bigg\{-\frac{(|x|+kt)^{2}}{2t}\bigg\}\\
 & %\quad\quad\quad+
+\bigg\{(|x|+kt)^{2}+t-\frac{1}{2k^{2}}\bigg\}\Phi\left(\frac{|x|+kt}{\sqrt{t}}\right)\\
 & +e^{-2k|x|}(|x|+T-t-\frac{1}{2k^{2}})\Phi\left(-\frac{|x|-kt}{\sqrt{t}}\right).
\end{split}
\]
The explicit solution agrees with the result of \cite{shreve} when
$k=-1$.
\end{example}

\begin{proof}
By Theorem \ref{mo}, $Y_{t}=u(T-t,B_{t}),\ Z_{t}=\partial_{x}u(T-t,B_{t})$
are the unique solution pair to the BSDE
\[
Y_{t}=B_{T}^{2}+k\int_{t}^{T}|Z_{s}|ds-\int_{t}^{T}Z_{s}dB_{s}.
\]
By Theorem \ref{1}, $\textrm{sgn}(B_{t})=\textrm{sgn}(Z_{t})$ for
$\varphi(x)=x^{2}$. Hence the expression for $u(t,x)$ follows from
(\ref{x2-yt}) immediately.
\end{proof}
\begin{example}
\label{3} We now calculate the solution of the following BSDE:
\begin{equation}
Y_{t}=I_{[a\leq B_{T}\leq b]}+\int_{t}^{T}k|Z_{s}|ds-\int_{t}^{T}Z_{s}dB_{s}.\label{indicator}
\end{equation}
\begin{description}
\item [{{(1)}}] For any $a,b\in(-\infty,\infty),$ set $c=\frac{a+b}{2},$

\begin{equation}
Y_{t}=\Phi\left(-\frac{|B_{t}-c|-k(T-t)-\frac{b-a}{2}}{\sqrt{T-t}}\right)-e^{-k(b-a)}\Phi\left(-\frac{|B_{t}-c|-k(T-t)+\frac{b-a}{2}}{\sqrt{T-t}}\right),\label{Y_t}
\end{equation}
and
\begin{equation}
Z_{t}=\frac{-\textrm{sgn}(B_{t}-c)}{\sqrt{2\pi(T-t)}}\left\{ e^{-\frac{[|B_{t}-c|-k(T-t)-\frac{b-a}{2}]^{2}}{2(T-t)}}-e^{-k(b-a)}e^{-\frac{[|B_{t}-c|-k(T-t)+\frac{b-a}{2}]^{2}}{2(T-t)}}\right\} .\label{Z_t}
\end{equation}

\item [{{(2)}}] If $a=-\infty$, then for any $b<\infty$,
\[
Y_{t}=\Phi\left(-\frac{B_{t}-k(T-t)-b}{\sqrt{T-t}}\right),\ \ Z_{t}=-\frac{1}{\sqrt{2\pi(T-t)}}e^{-\frac{[B_{t}-k(T-t)-b]^{2}}{2(T-t)}}.
\]
\item [{{(3)}}] If $b=+\infty$, then for any $a>-\infty,$ we have
\[
Y_{t}=\Phi\left(\frac{B_{t}+k(T-t)-a}{\sqrt{T-t}}\right),\ \ Z_{t}=\frac{1}{\sqrt{2\pi(T-t)}}e^{-\frac{[B_{t}+k(T-t)-a]^{2}}{2(T-t)}}.
\]
\end{description}
\end{example}

\begin{proof}
(1) For any $\varepsilon>0$, set
\[
\varphi_{\varepsilon}(x)=E[I_{[a,b)}(x+\sqrt{\varepsilon}\xi)]=\int_{-\infty}^{\infty}I_{[a,b]}(v)\frac{1}{\sqrt{2\pi\varepsilon}}\exp\left[-\frac{(v-x)^{2}}{2\varepsilon}\right]dv
\]
where $\xi$ is a standard normal distribution under probability measure
$P$. Then $\varphi_{\varepsilon}\in C^{\infty}(\mathbb{R})$ and
$\varphi_{\varepsilon}(x)\rightarrow I_{[a,b)}(x)$ as $\varepsilon\rightarrow0$.

Consider the following BSDE
\[
Y_{t}^{\varepsilon}=\varphi_{\varepsilon}(B_{T})+\int_{t}^{T}k|Z_{s}|ds-\int_{t}^{T}Z_{s}dB_{s}.
\]
By Theorem \ref{2},
\begin{eqnarray*}
Y_{t}^{\varepsilon} & = & e^{-\frac{1}{2}k^{2}(T-t)}\bigg\{\int_{\mathbb{R}}\int_{y\geq0}\varphi_{\varepsilon}(x+h)e^{k|x-c+h|-k|c-h|-ky}P(B_{T-t}\in dx,L_{T-t}^{c-h}\in dy)\bigg\}\bigg|_{h=B_{t}}\\
 & = & e^{-\frac{1}{2}k^{2}(T-t)}\left[\int_{-\infty}^{\infty}\varphi_{\varepsilon}(x+h)\cdot f(c,h,x)dx\right]\bigg|_{h=B_{t}}\\
 & =: & H_{\varepsilon}(h)|_{h=B_{t}},
\end{eqnarray*}
where
\[
f(c,h,x)=\int_{0}^{\infty}\exp\{-k|x-c+h|+k|c-h|+ky\}f_{1}(c,h,x,y)dy+f_{2}(c,h,x),
\]
where
\[
f_{1}(c,h,x,y)=\frac{y+|x-(c-h)|+|c-h|}{\sqrt{2\pi(T-t)^{3}}}\exp\left[-\frac{(y+|x-(c-h)|+|c-h|)^{2}}{2(T-t)}\right],
\]
and
\[
f_{2}(c,h,x)=\frac{e^{-k|x-c+h|+k|c-h|}}{\sqrt{2\pi(T-t)}}\left\{ e^{-\frac{x^{2}}{2(T-t)}}-\exp\left[-\frac{(|x-(c-h)|+|c-h|)^{2}}{2(T-t)}\right]\right\} .
\]

Now we prove that
\[
H_{\varepsilon}(h)\to H(h)\quad\textrm{ as }\ \varepsilon\to0,
\]
where
\[
H(h)=\Phi\left(-\frac{|h-c|-k(T-t)-\frac{b-a}{2}}{\sqrt{T-t}}\right)-e^{-k(b-a)}\Phi\left(-\frac{|h-c|-k(T-t)+\frac{b-a}{2}}{\sqrt{T-t}}\right).
\]
Actually we have
\begin{eqnarray*}
H(h) & = & e^{-\frac{1}{2}k^{2}(T-t)}\bigg\{\int_{\mathbb{R}}\int_{y\geq0}I_{[a,b)}(x+h)e^{k|x-c+h|-k|c-h|-ky}P(B_{T-t}\in dx,L_{T-t}^{c-h}\in dy)\bigg\}\\
 & = & e^{-\frac{1}{2}k^{2}(T-t)}\left[\int_{-\infty}^{\infty}I_{[a,b)}(x+h)\cdot f(c,h,x)dx\right].
\end{eqnarray*}
%By Fubini's theorem,
%\begin{align*}
%H_{\varepsilon}(h)= & e^{-\frac{1}{2}k^{2}(T-t)}\int_{-\infty}^{\infty}\varphi_{\varepsilon}(x+h)f(c,h,x)dx\\
%= & e^{-\frac{1}{2}k^{2}(T-t)}\int_{-\infty}^{\infty}\int_{-\infty}^{\infty}I_{[a,b]}(v)\frac{1}{\sqrt{2\pi\varepsilon}}\exp\left[-\frac{(v-x-h)^{2}}{2\varepsilon}\right]f(c,h,x)dvdx\\
%= & e^{-\frac{1}{2}k^{2}(T-t)}\int_{-\infty}^{\infty}\int_{-\infty}^{\infty}I_{[a,b]}(x+u+h)\frac{1}{\sqrt{2\pi\varepsilon}}\exp\left[-\frac{u^{2}}{2\varepsilon}\right]f(c,h,x)dxdu\\
%= & \int_{-\infty}^{\infty}\frac{1}{\sqrt{2\pi\varepsilon}}\exp\left[-\frac{u^{2}}{2\varepsilon}\right]H(u+h)du\\
%= & E[H(h+\sqrt{\varepsilon}\xi)],
%\end{align*}
%where $\xi$ is a standard normal distribution under probability measure
%$P$.
%
%It can be checked that, for any $t\geq0$, $H(h)\in C[-\infty,\infty]$
%which is the class of all the bounded continuous functions with finite
%limits at $\pm\infty$. Then we have $H_{\varepsilon}(h)$ uniformly
%converge to $H(h)$ as $\varepsilon\to0.$
By Lebesgue's Dominated convergence theorem, we have $H_{\varepsilon}(h)$ converges to $H(h)$ as $\varepsilon\to0,$ which means $H_{\varepsilon}(B_t)$ converges to $H(B_t)$ almost surely. Therefore, we have
\[
Y_{t}=H(B_{t})=\Phi\left(-\frac{|B_{t}-c|-k(T-t)-\frac{b-a}{2}}{\sqrt{T-t}}\right)-e^{-k(b-a)}\Phi\left(-\frac{|B_{t}-c|-k(T-t)+\frac{b-a}{2}}{\sqrt{T-t}}\right).
\]

By Corollary 4.1 in El Karoui, Peng and Quenez \cite{peng1}, we have
$Z_{t}=\partial_{h}H(B_{t}).$ So we get $Z_{t}$ given by (\ref{Z_t}).
From which we can see

\[
\textrm{sgn}(Z_{t})=-\textrm{sgn}(B_{t}-c),
\]
which means Theorem \ref{1} also holds for indicator function.

We now prove (2). In fact, since $a=-\infty$, then for any $b\in\mathbb{R}$,
$c=\frac{a+b}{2}=-\infty$. By \ Theorem \ref{mo}, $\textrm{sgn}(Z_{t})=-\textrm{sgn}(B_{t}-c)=-1$,
which implies that $Z_{t}<0$ for \ $t\in[0,T],$ and BSDE(\ref{indicator})
is a linear BSDE:
\[
Y_{t}=I_{[B_{T}\leq b]}-\int_{t}^{T}kZ_{s}ds-\int_{t}^{T}Z_{s}dB_{s}.
\]
By solving the linear BSDE, we obtain (2).

Similarly, we may deduce (3), and we omit the details.
\end{proof}
\begin{rem}
The BSDE (\ref{3.1}) is associated with the parabolic equation
\begin{equation}
\frac{\partial u}{\partial t}=\frac{1}{2}\Delta u+g(t,u,\nabla u),\quad u(0,x)=\varphi(x).
\end{equation}
For the study of the sign of $Z_{t}$, it is actually equivalent to
the study the nodal set of $u_{x}$. It has a connection to the work
of Qian and Xu (2018). For more details, see \cite{qian-xu}.
\end{rem}

We plot one sample path of Brownian motion $B_{t}$ and the solution
$Z_{t}$ of Example \ref{3} in the Figure \ref{fig_bm_z}, in which
the blue line is $B_{t}$ and the red is $Z_{t}$. We can see the
relationship of the sign between $B_{t}-c$ and $Z_{t}$ intuitively
in this figure.

\begin{figure}[!htbp]
\begin{centering}
\includegraphics[width=11cm,height=7cm]{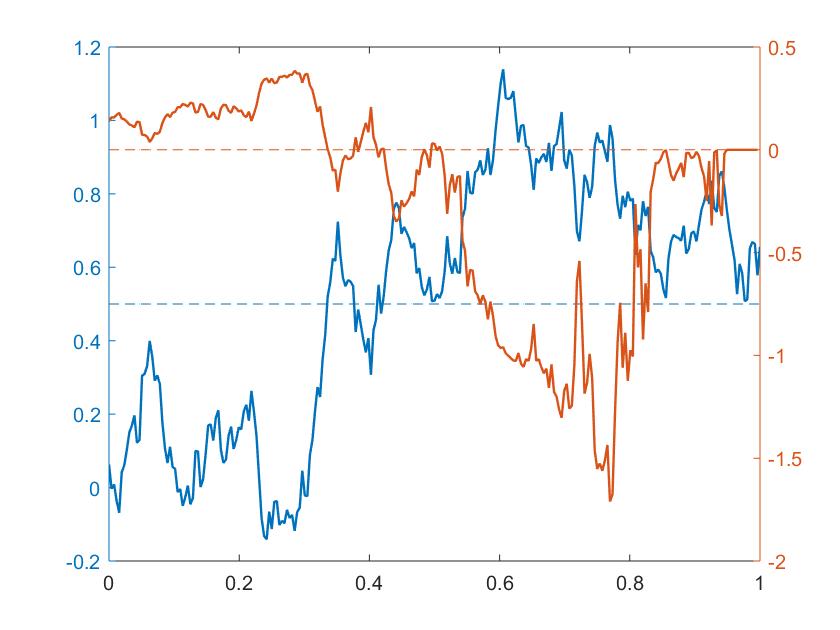}
\par\end{centering}
\caption{Brownian motion $B_{t}$ (blue) and solution $Z_{t}$ (red) ($a=0,\ b=1,\ k=0.1,\ T=1$)}
\label{fig_bm_z}
\end{figure}

\newpage{}

\section{Robust prices in incomplete markets}

\label{section 5}

The Black-Scholes model studied by Black and Scholes (1973), Merton
(1973,1991) is the most celebrated example of option pricing and hedging
in a complete market using no-arbitrage theory and martingale methods.
According to this theory, when a stock obeys the geometric Brownian
motion
\begin{equation}
dS_{t}=\mu S_{t}dt+\sigma S_{t}dB_{t},\ \ S_{0}=1,\label{eq:bs1}
\end{equation}
there exists a unique risk neutral martingale measure $Q$ such that
the price of the contingent claim $\xi$ at time $T$ was given by
$E_{Q}[\xi e^{-rT}]$, where
\begin{equation}
\left.\frac{dQ}{dP}\right|_{\mathcal{F}_{T}}=e^{\int_{0}^{T}(\frac{\mu-r}{\sigma})dB_{s}-\frac{1}{2}\int_{0}^{T}(\frac{\mu-r}{\sigma})^{2}ds},\label{eq:mart-q1}
\end{equation}
where $r$ is the interest rate of a bond. Therefore, for $\xi=I_{(a\leq S_{T}\leq b)}$,
the price of the contingent claim $\xi$ is given by
\begin{equation}
E_{Q}\left(\xi e^{-rT}\right)=e^{-rT}\left[\Phi\left(\frac{\ln b-(2\mu-r-0.5\sigma^{2})T}{\sigma\sqrt{T}}\right)-\Phi\left(\frac{\ln a-(2\mu-r-0.5\sigma^{2})T}{\sigma\sqrt{T}}\right)\right].\label{eq:mart-01}
\end{equation}
In an incomplete market, the incompleteness of the market usually
gives rise to infinitely many martingale measures, therefore upper
and lower pricing was studied by El Karoui and Quenez (1995) \cite{el},
EL Karoui and Peng (1997) \cite{peng2}. They use the min-max pricing
to show that, the pricing of an insurance or contingent claim equals
the maximal (minimal) expectations with respect to a set of martingale
measures. Chen and Epstein (2002) studied the ambiguity pricing under
a set of special measures $\mathcal{P}$, where
\begin{equation}
\mathcal{P}=\left\{ Q:\left.\frac{dQ}{dP}\right|_{\mathcal{F}_{t}}=\exp\left[\int_{0}^{t}\theta_{s}dB_{s}-\frac{1}{2}\int_{0}^{t}\theta_{s}^{2}ds\right],\ \ \underset{s\in[0,T]}{\sup}|\theta_{s}|\leq k\right\} .\label{eq:fam-aq1}
\end{equation}

Set
\begin{equation}
y_{t}=\textrm{ess}\underset{Q\in\mathcal{P}}{\inf}E_{Q}[\xi|\mathcal{F}_{t}]\textrm{ and }Y_{t}=\textrm{ess}\underset{Q\in\mathcal{P}}{\sup}E_{Q}[\xi|\mathcal{F}_{t}].\label{eq:fam-aq2}
\end{equation}
It is known that $y_{t}$ and $Y_{t}$ are respectively the minimum
and maximum price of $\xi$ in an incomplete market. Chen and Epstein
(2002)\cite{chen2} have shown that there exists an adapted $z_{t}$
such that $Y_{t}$ and $y_{t}$ have the following representations:
\begin{equation}
y_{t}=\xi-k\int_{t}^{T}|z_{s}|ds-\int_{t}^{T}z_{s}dB_{s}\label{eq:rep-aaq1}
\end{equation}
and
\begin{equation}
Y_{t}=\xi+k\int_{t}^{T}|z_{s}|ds-\int_{t}^{T}z_{s}dB_{s}.\label{eq:rep-aaq2}
\end{equation}

By using our results in the previous sections, we may give the explicit
representation of the wealth $Y_{t}$ when the stock price $S_{t}$
obeys the geometric Brown motion
\begin{equation}
S_{t}=\exp\left[(\mu-\frac{1}{2}\sigma^{2})t+\sigma B_{t}\right]\label{eq:bs-geom1}
\end{equation}
and $\xi=I_{(a\leq S_{T}\leq b)}$.

In fact, when $\xi=I_{(a\leq S_{T}\leq b)}$, that is,
\begin{equation}
\xi=I_{\left\{ \frac{\ln a-(\mu-0.5\sigma^{2})T}{\sigma}\leq B_{T}\leq\frac{\ln b-(\mu-0.5\sigma^{2})T}{\sigma}\right\} }.\label{eq:term-s1}
\end{equation}
According to the calculation in Example \ref{3}, with $c=\frac{\ln(ab)}{2\sigma}-\frac{(\mu-0.5\sigma^{2})T}{\sigma}$,
we have the upper pricing which is given by
\begin{equation}
Y_{t}=\Phi\left(-\frac{|B_{t}-c|-k(T-t)-\frac{\ln{(b/a)}}{2\sigma}}{\sqrt{T-t}}\right)-e^{-k\frac{\ln{(b/a)}}{\sigma}}\Phi\left(-\frac{|B_{t}-c|-k(T-t)+\frac{\ln{(b/a)}}{2\sigma}}{\sqrt{T-t}}\right)\label{eq:upper-pr1}
\end{equation}
and the lower pricing is given as
\begin{equation}
y_{t}=\Phi\left(-\frac{|B_{t}-c|+k(T-t)-\frac{\ln{(b/a)}}{2\sigma}}{\sqrt{T-t}}\right)-e^{k\frac{\ln{(b/a)}}{\sigma}}\Phi\left(-\frac{|B_{t}-c|+k(T-t)+\frac{\ln{(b/a)}}{2\sigma}}{\sqrt{T-t}}\right).\label{eq:lower-pr1}
\end{equation}
In particular, let $t=0$, then
\begin{equation}
Y_{0}=\Phi\left(-\frac{|c|-kT-\frac{\ln{(b/a)}}{2\sigma}}{\sqrt{T}}\right)-e^{-k\frac{\ln{(b/a)}}{\sigma}}\Phi\left(-\frac{|c|-kT+\frac{\ln{(b/a)}}{2\sigma}}{\sqrt{T}}\right)\label{eq:upper-pr3}
\end{equation}
and
\begin{equation}
y_{0}=\Phi\left(-\frac{|c|+kT-\frac{\ln{(b/a)}}{2\sigma}}{\sqrt{T}}\right)-e^{k\frac{\ln{(b/a)}}{\sigma}}\Phi\left(-\frac{|c|+kT+\frac{\ln{(b/a)}}{2\sigma}}{\sqrt{T}}\right).\label{eq:lower-pr3}
\end{equation}

\end{document}